\theoremstyle{plain}
\newtheorem{theorem}{Theorem}[section]
\newtheorem*{theorem*}{Theorem}
\newtheorem{lemma}[theorem]{Lemma}
\newtheorem{proposition}[theorem]{Proposition}
\newtheorem*{proposition*}{Proposition}
\theoremstyle{definition}
\theoremstyle{remark}
\crefname{lemma}{Lemma}{Lemmas}
\crefname{theorem}{Theorem}{Theorems}
\crefname{proposition}{Proposition}{Propositions}
\crefname{enumi}{Statement}{Statements}
\crefname{claim}{Claim}{Claims}
\newcommand*{\1}{\mathds{1}}
\title{Stability of the Rankine Vortex and Perimeter Growth in Vortex Patches}
\author{John Brownfield}
\date{November 2025}
\begin{document}

\maketitle

\setlength{\parindent}{0pt}

\begin{abstract}
    We prove that for $\omega: \mathbb{R}^2 \to [0,1]$ sharing the same total vorticity and center of vorticity as the Rankine vortex, the $L^1$ deviation from the Rankine patch can be bounded by a function of the pseudo-energy deviation and the angular momentum of $\omega$. In the case of $m-$fold symmetry, the dependence on the angular momentum can be dropped. Using this, we affirm the results of prior simulations by demonstrating linear in time perimeter growth for a simply connected perturbation of the Rankine vortex.
\end{abstract}

\tableofcontents

\section{Introduction}
The vorticity form of 2D incompressible Euler is
\begin{align*}
    \partial_t \omega + u\cdot\nabla\omega &= 0 \\
    \omega |_{t=0} &= \omega_0,
\end{align*}
where $u$ is given by the Biot-Savart Law as $u = K*\omega$,
\begin{align*}
    K(x) := \frac{x^\perp}{2\pi|x|^2} = \frac{1}{2\pi|x|^2}(-x_2, x_1).
\end{align*}
It was shown by Yudovich \cite{Yudovich_1963} that a unique global in time weak solution $\omega$ exists under the assumption $\omega_0 \in L^1 \cap L^\infty$. An interesting class of solutions, called vortex patches, are those for which $\omega_0$ is the characteristic function of a set of finite measure. Since $\omega$ satisfies a transport equation, this initial condition is preserved for all time, and Chemin \cite{Chemin_1991} showed that smooth initial boundaries remain smooth. Despite this, an interesting topic of recent research has been constructing solutions that lose initial regularity in infinite time. These solutions are often found as stable perturbations (in weak norms) of known, simple solutions, and the stability is often accomplished by leveraging conserved quantities. For 2D incompressible Euler, the following quantities,
\begin{align*}
    \int \omega, \int x\omega, \int|x|^2\omega, \text{ and } \int |u|^2,
\end{align*} 
are all conserved when finite. These are respectively referred to as the total vorticity, center of vorticity, angular momentum, and kinetic energy. For fixed total vorticity and center of vorticity at the origin, it is not hard to see that a disk is the unique minimizer of the angular momentum among vortex patches. As a result, $\omega_* = \1_{B(0,r)}$ is a steady state of 2D incompressible Euler, and since the equations are translation invariant, so is $\omega_* = \1_{B(a,r)}$. Solutions of this form are known as Rankine vortex patches, and they enjoy a long history of stability results. Pulvirenti-Wan \cite{Wan_1985} used this characterization of Rankine patches as minimizers of the angular momentum to derive quantitative $L^1$ stability estimates on bounded domains. Sideris-Vega \cite{Sideris_2009} later upgraded this result to the whole plane, essentially bounding the $L^1$ deviation of $\omega_t$ from a disk by the angular momentum of $\omega_0$ that came from outside the Rankine patch. \\

A similar idea works with the kinetic energy, after a suitable modification. The problem is that for a nontrivial vortex patch, $u$ decays like $1/|x|$ and is not in $L^2(\mathbb{R}^2)$. However a related conserved quantity, the pseudo-energy, is both finite and maximized (under fixed total vorticity and center of vorticity) among vortex patches by a disk. The pseudo-energy $E(\omega)$ is found by integrating by parts the kinetic energy:
\begin{align*}
    E(\omega) := \frac{1}{2\pi}\int \int_{\mathbb{R}^2\times \mathbb{R}^2} \omega(x) \ln(|x-y|^{-1})\omega(y)dxdy.
\end{align*}
Pulvirenti-Wan \cite{Wan_1985} was one of the first to use the pseudo-energy to develop a quantitative $L^1$ stability result for Rankine vortex patches on disk shaped domains. Tang \cite{Tang_1987} upgraded this to the whole plane by showing that, assuming an initial patch sufficiently close to the Rankine vortex and with bounded angular momentum, the $L^1$ deviation can still be bounded by that of the pseudo-energy. In the time since, the methods used for deriving quantitative estimates for extrema of these convolution-like functionals have developed greatly. It was remarked in the work of Yan-Yao \cite{Yan_2022} that the results of Frank-Lieb \cite{Frank_2021} imply the following proposition:
\begin{proposition} \label{prop:pseudo-bound}
    There exists $C_n>0$ such that for all $\omega : \mathbb{R}^n \to [0,1]$ with $\omega \in L^1(\mathbb{R}^n)$,
    \begin{align*}
        \int\int_{E^*\times E^*}& \ln(|x-y|^{-1})dxdy - \int\int_{\mathbb{R}^n\times\mathbb{R}^n} \omega(x)\ln(|x-y|^{-1})\omega(y)dxdy \\
        &\geq C_n \inf_{a\in\mathbb{R}^n}||\omega-\1_{E^*+a}||_{L^1}^2
    \end{align*}
    where $E^*$ is the ball centered at the origin with $|E^*| = \int_{\mathbb{R}^n} \omega(x)dx$.
\end{proposition}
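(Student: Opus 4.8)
I would prove this by deducing it from the quantitative Riesz rearrangement inequality of Frank--Lieb \cite{Frank_2021}, in the spirit of the remark of Yan--Yao \cite{Yan_2022}. Throughout write $\mathcal I(\omega):=\iint_{\mathbb R^n\times\mathbb R^n}\omega(x)\ln(|x-y|^{-1})\omega(y)\,dx\,dy$, so that the first term in the statement is $\mathcal I(\1_{E^*})$, and assume $\mathcal I(\omega)$ is finite (otherwise the left-hand side is $+\infty$ while the right-hand side is finite, so there is nothing to prove). Let $\omega^\star$ be the symmetric decreasing rearrangement of $\omega$; it again takes values in $[0,1]$ and has $\int\omega^\star=|E^*|$. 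Since $\mathcal I$ and $\omega\mapsto\omega^\star$ are both translation invariant, any quantity the deficit can control must be translation invariant too, which is exactly why $\inf_{a}$ appears and cannot be removed. I would write
\begin{equation*}
\mathcal I(\1_{E^*})-\mathcal I(\omega)=\big(\mathcal I(\1_{E^*})-\mathcal I(\omega^\star)\big)+\big(\mathcal I(\omega^\star)-\mathcal I(\omega)\big),
\end{equation*}
and observe that each summand is nonnegative: the second by the Riesz rearrangement inequality for the radially decreasing kernel $\ln(|x|^{-1})$ (after a standard truncation handling its non-integrability and sign change at infinity), and the first by a layer-cake argument --- decompose $\omega^\star=\int_0^1\1_{\{\omega^\star>t\}}\,dt$ into concentric balls and use strict monotonicity of the kernel to see that among radially symmetric $[0,1]$-valued functions of mass $|E^*|$ the characteristic function $\1_{E^*}$ is the unique maximizer of $\mathcal I$.

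The content is in making both steps quantitative. For the rearrangement step I would invoke the Frank--Lieb quantitative rearrangement inequality: because $0\le\omega\le1$ forces the level sets of $\omega$ to be nested, the abstract rearrangement asymmetry it produces can be upgraded to an honest $L^1$ distance, giving
\begin{equation*}
\mathcal I(\omega^\star)-\mathcal I(\omega)\;\gtrsim_n\;\inf_{a\in\mathbb R^n}\|\omega-\omega^\star(\cdot-a)\|_{L^1}^2 .
\end{equation*}
For the bathtub step the same concentric-ball comparison, made quantitative, yields $\mathcal I(\1_{E^*})-\mathcal I(\omega^\star)\gtrsim_n\|\omega^\star-\1_{E^*}\|_{L^1}^2$; this is the easier estimate, since spreading a radial profile away from a ball of fixed mass costs logarithmic energy at a rate at least quadratic in the induced $L^1$ displacement. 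Finally, picking $a_1$ that (nearly) attains $\inf_a\|\omega-\omega^\star(\cdot-a)\|_{L^1}$ and using the triangle inequality,
\begin{equation*}
\inf_{a}\|\omega-\1_{E^*+a}\|_{L^1}\le\|\omega-\omega^\star(\cdot-a_1)\|_{L^1}+\|\omega^\star-\1_{E^*}\|_{L^1},
\end{equation*}
one squares and uses $(s+t)^2\le 2s^2+2t^2$ to combine the two lower bounds into the asserted inequality.

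The main obstacle is the quantitative rearrangement inequality itself, which I would use as a black box. The delicate point in applying it is guaranteeing that the deficit controls an $L^1$ asymmetry to the \emph{second} power, rather than to a worse power or in a weaker (negative-order Sobolev type) norm; this is exactly where the hypothesis $\omega:\mathbb R^n\to[0,1]$ is essential, since it pins the competitor close to a genuine characteristic function. A secondary, purely technical, issue is threading the truncation of $\ln(|x|^{-1})$ through both the rearrangement and the bathtub steps, which is routine given that all energies in sight are finite. I note that when $n=2$ the qualitative maximality of $\1_{E^*}$ admits a direct proof: expanding $\mathcal I(\1_{E^*}+\eta)$ with $\eta=\omega-\1_{E^*}$, the cross term equals $\int\eta\,\big(\ln(|\cdot|^{-1})\ast\1_{E^*}\big)$, which is $\le 0$ because $\eta\le0$ on $E^*$ and $\ge0$ outside while the logarithmic potential of a disk is radially decreasing, and the quadratic term $\iint\eta(x)\ln(|x-y|^{-1})\eta(y)$ is $\ge0$ on mean-zero $\eta$ by positive definiteness of the logarithmic energy; but turning this identity into the sharp quadratic bound still requires the Frank--Lieb input, so it offers no shortcut.
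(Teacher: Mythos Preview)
Your approach is plausible but takes a genuinely different route from the paper, and in doing so adds work that the paper avoids. You split the deficit through the rearrangement $\omega^\star$,
\[
\mathcal I(\1_{E^*})-\mathcal I(\omega)=\big(\mathcal I(\1_{E^*})-\mathcal I(\omega^\star)\big)+\big(\mathcal I(\omega^\star)-\mathcal I(\omega)\big),
\]
and then need \emph{two} quantitative estimates (a quantitative bathtub step and a quantitative Riesz step), which you combine by the triangle inequality. The paper instead decomposes the \emph{kernel}: after reducing to compactly supported $\omega$, it writes
\[
\ln(|x-y|^{-1})=\int_{|x-y|}^{2L}\frac{dR}{R}-\ln(2L),
\]
so that (after the constant terms cancel) the pseudo-energy deficit becomes
\[
\int_0^{2L}\frac{dR}{R}\Big(\iint_{E^*\times E^*}\1_{B_R}(x-y)\,dx\,dy-\iint\omega(x)\1_{B_R}(x-y)\omega(y)\,dx\,dy\Big).
\]
For each $R$ this is exactly the deficit for the indicator kernel $\1_{B_R}$, which is the setting of the Frank--Lieb theorem actually quoted in the paper: their bound already compares $\omega$ directly to $\1_{E^*+a}$, with a constant $c_{n,\delta}$ uniform over the range $\tfrac14\le |B_R|^{1/n}/(2\|\omega\|_{L^1}^{1/n})\le\tfrac34$. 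Outside that range the integrand is merely $\ge 0$ by Riesz and bathtub; inside it one picks up $c_{n,1/4}\inf_a\|\omega-\1_{E^*+a}\|_{L^1}^2$ times $\int_I dR/R$, a pure dimensional constant. No passage through $\omega^\star$, no separate quantitative bathtub lemma, and no recombination via $(s+t)^2\le 2s^2+2t^2$ is needed.

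The practical difference is that the Frank--Lieb input you invoke---a quantitative rearrangement inequality for the logarithmic kernel giving $\inf_a\|\omega-\omega^\star(\cdot-a)\|_{L^1}^2$---is not the statement proved in \cite{Frank_2021}; what they prove is the indicator-kernel version. So your ``black box'' would itself have to be manufactured by the same layer-cake-in-$R$ trick the paper uses, at which point the detour through $\omega^\star$ is redundant. Your quantitative bathtub claim $\mathcal I(\1_{E^*})-\mathcal I(\omega^\star)\gtrsim_n\|\omega^\star-\1_{E^*}\|_{L^1}^2$ is also asserted rather than proved; it is true (and scales correctly), but the cleanest proof is again to decompose $\ln$ into indicators and apply Frank--Lieb to the radial $\omega^\star$, where the infimum over $a$ is attained at $0$. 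In short: your strategy can be completed, but the paper's kernel decomposition is the shorter path, precisely because the Frank--Lieb indicator result already lands on $\1_{E^*+a}$ rather than on $\omega^\star$.
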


A proof of \cref{prop:pseudo-bound} using the results of \cite{Frank_2021} is included in the appendix. Using this proposition, we can show the following two stability theorems for Rankine vortex patches:
\begin{theorem}[Stability under $m$-fold symmetry]\label{thm:mstability}
    There exists a universal constant $C>0$, such that if $\omega_0: \mathbb{R}^2 \to [0,1]$ satisfies $\omega_0 = \omega_0 \circ R_{2\pi/m}$ for some $m \geq 2$ and $\int \omega_0(x)dx = \pi r^2$, then the solution to 2D incompressible Euler having initial data $\omega_0$ satisfies
    \begin{align*}
        ||\omega_t - \1_{B(0,r)}||^2_1 \leq C(E(\1_{B(0,r)}) -E(\omega_0))
    \end{align*}
    for all time.
\end{theorem}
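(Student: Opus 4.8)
The plan is to feed the evolved vorticity $\omega_t$ into \cref{prop:pseudo-bound} (with $n=2$), use conservation of the pseudo-energy to replace $E(\omega_t)$ by $E(\omega_0)$, and then remove the translation infimum appearing in that proposition by exploiting the $m$-fold symmetry. Since $\omega_0\in L^1\cap L^\infty$ with $0\le\omega_0\le1$, Yudovich's theorem yields a unique global solution $\omega_t$ with $\|\omega_t\|_{L^1}=\pi r^2$ and $0\le\omega_t\le1$ for all $t$, and the pseudo-energy is conserved, so $E(\omega_t)=E(\omega_0)$. As $|B(0,r)|=\pi r^2$, the ball $E^*$ attached to $\omega_t$ in \cref{prop:pseudo-bound} is exactly $B(0,r)$; unwinding the $\tfrac{1}{2\pi}$ in the definition of $E$, the proposition reads
\begin{align*}
  2\pi\big(E(\1_{B(0,r)})-E(\omega_0)\big)\;\ge\;C_2\,\inf_{a\in\mathbb{R}^2}\|\omega_t-\1_{B(a,r)}\|_{L^1}^2 .
\end{align*}
Thus it suffices to establish the symmetry-improved bound $\|\omega_t-\1_{B(0,r)}\|_{L^1}\le C_1\inf_{a}\|\omega_t-\1_{B(a,r)}\|_{L^1}$ with a universal $C_1$, since squaring and chaining it with the display yields the theorem with $C=2\pi C_1^2/C_2$.

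For the symmetry-improved bound, first note that $R_{2\pi/m}$-equivariance of the Biot--Savart kernel together with uniqueness of Yudovich solutions propagates the hypothesis $\omega_0=\omega_0\circ R_{2\pi/m}$ to $\omega_t=\omega_t\circ R_{2\pi/m}$ for all $t$. Consequently the change of variables $x\mapsto R_{2\pi/m}^{k}x$ gives $\|\omega_t-\1_{B(R_{2\pi/m}^{k}a,\,r)}\|_{L^1}=\|\omega_t-\1_{B(a,r)}\|_{L^1}$ for every integer $k$ and every $a\in\mathbb{R}^2$; in particular the deviation is constant along the rotation orbit of a center. Write $\delta:=\inf_a\|\omega_t-\1_{B(a,r)}\|_{L^1}$, and let $c_0,C_0>0$ be the universal constants in the elementary two-sided estimate for symmetric differences of congruent disks,
\begin{align*}
  |B(p,r)\triangle B(q,r)|\;\ge\;c_0\min\!\big(|p-q|\,r,\;r^2\big),\qquad\quad |B(p,r)\triangle B(q,r)|\;\le\;C_0\,|p-q|\,r .
\end{align*}
If $\delta\ge\tfrac{c_0}{2}r^2$ the target bound is immediate, since $\|\omega_t-\1_{B(0,r)}\|_{L^1}\le2\pi r^2\le(4\pi/c_0)\,\delta$. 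So assume $\delta<\tfrac{c_0}{2}r^2$, fix $\varepsilon>0$ with $\delta+\varepsilon<\tfrac{c_0}{2}r^2$, and choose $a$ with $\|\omega_t-\1_{B(a,r)}\|_{L^1}\le\delta+\varepsilon$.

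Set $b:=R_{2\pi/m}^{\lfloor m/2\rfloor}a$. By the rotation identity, $\|\omega_t-\1_{B(b,r)}\|_{L^1}\le\delta+\varepsilon$, so $\|\1_{B(a,r)}-\1_{B(b,r)}\|_{L^1}\le2(\delta+\varepsilon)$ by the triangle inequality. The angle between $a$ and $b$ is $\lfloor m/2\rfloor\cdot 2\pi/m\in[2\pi/3,\pi]$ for every $m\ge2$, hence $|a-b|=2\sin\!\big(\tfrac{\pi}{m}\lfloor m/2\rfloor\big)|a|\ge\sqrt3\,|a|$. Since $2(\delta+\varepsilon)<c_0 r^2$, the lower bound above rules out the alternative $\min(|a-b|\,r,r^2)=r^2$ (which would give $2(\delta+\varepsilon)\ge c_0 r^2$) and forces $c_0\sqrt3\,|a|\,r\le2(\delta+\varepsilon)$, i.e. $|a|\le\tfrac{2}{\sqrt3\,c_0}(\delta+\varepsilon)/r<r$. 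Therefore
\begin{align*}
  \|\omega_t-\1_{B(0,r)}\|_{L^1}\;\le\;\|\omega_t-\1_{B(a,r)}\|_{L^1}+|B(a,r)\triangle B(0,r)|\;\le\;(\delta+\varepsilon)+C_0\,|a|\,r\;\le\;\Big(1+\tfrac{2C_0}{\sqrt3\,c_0}\Big)(\delta+\varepsilon),
\end{align*}
and letting $\varepsilon\downarrow0$ gives the symmetry-improved bound with $C_1=\max\{\,4\pi/c_0,\ 1+2C_0/(\sqrt3\,c_0)\,\}$.

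The crux — and the only place where the hypothesis $m\ge2$ and the demand for an $m$-independent constant truly bite — is the choice of rotate used to pin down $|a|$: comparing $a$ with its nearest rotate $R_{2\pi/m}a$ only gives $|a-R_{2\pi/m}a|=2\sin(\pi/m)|a|$, which degenerates as $m\to\infty$ and would leave $C_1$ depending on $m$, whereas the near-antipodal rotate $R_{2\pi/m}^{\lfloor m/2\rfloor}$ keeps the separation angle bounded below by $2\pi/3$ uniformly in $m$, which is exactly what makes $C$ genuinely universal. The remaining ingredients are routine: proving the disk symmetric-difference inequalities, checking the mild integrability needed for \cref{prop:pseudo-bound} and for $E$ to be finite and conserved along $\omega_t$ (if $E(\omega_0)=-\infty$ the theorem is vacuous, and otherwise finiteness propagates by conservation), and identifying the infimum in \cref{prop:pseudo-bound} with the quantity $\delta$ above.
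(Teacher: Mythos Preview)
Your proof is correct and follows the same high-level scheme as the paper: apply \cref{prop:pseudo-bound} to $\omega_t$, invoke conservation of the pseudo-energy, and then use the $m$-fold symmetry to replace the translation infimum by the centered ball. The only point of divergence is how you establish the symmetry step, i.e.\ the content of \cref{prop:mbound}. The paper splits into even and odd $m$: for even $m$ it compares $B(a,r)$ with $B(-a,r)$ and uses $B(a,r)\cap B(-a,r)\subset B(0,r)$ together with an inclusion--exclusion integral identity; for odd $m$ it takes three rotates $a, R_{2\pi n/m}a, R_{2\pi(n+1)/m}a$ and shows their triple intersection lies in $B(0,r)$, yielding the explicit constant $1/3$. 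You instead pick the single near-antipodal rotate $R_{2\pi/m}^{\lfloor m/2\rfloor}a$, bound $|a|$ via the lower symmetric-difference estimate for congruent disks, and close with the upper symmetric-difference estimate. Your route is a bit slicker in that it avoids the even/odd case split and needs only two rotates; the paper's route gives a clean explicit constant without appealing to the auxiliary constants $c_0,C_0$.
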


\begin{theorem}[Stability under bounded angular momentum]\label{thm:istability}
    Let $I, r >0$. There exists $C = C(I,r) > 0$ such that if
    $\omega_0:\mathbb{R}^2 \to [0,1]$ satisfies 
    \[
    \int |x|^2 \omega_0(x)dx \leq I, \int x\omega_0(x)dx =0,\text{ and } \int \omega_0(x)dx = \pi r^2,
    \]
    then the solution to 2D incompressible Euler having initial data $\omega_0$ satisfies
    \begin{align*}
        ||\omega_t - \1_{B(0,r)}||^4_1 \leq C(E(\1_{B(0,r)}) -E(\omega_0))
    \end{align*}
    for all time.
\end{theorem}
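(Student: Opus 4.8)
\medskip
\noindent\textbf{Proof proposal.}
The plan is to reduce the time-dependent statement to an instantaneous one and then strip off a translation parameter using the conserved moments. Along the 2D Euler flow the total vorticity, the center of vorticity $\int x\,\omega$, the angular momentum $\int|x|^2\omega$, and the pseudo-energy $E(\omega)$ are all conserved (all finite at $t=0$), so the three hypotheses on $\omega_0$ are inherited by $\omega_t$ for every $t$, and $E(\1_{B(0,r)})-E(\omega_t)=E(\1_{B(0,r)})-E(\omega_0)$. It therefore suffices to prove the static estimate: there is $C=C(I,r)>0$ so that every $\omega:\mathbb{R}^2\to[0,1]$ with $\int\omega=\pi r^2$, $\int x\,\omega=0$, and $\int|x|^2\omega\le I$ satisfies $\|\omega-\1_{B(0,r)}\|_1^4\le C\,D$ with $D:=E(\1_{B(0,r)})-E(\omega)$. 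Since $2\pi D$ is exactly the left-hand side of \cref{prop:pseudo-bound} with $n=2$ and $E^*=B(0,r)$, we have $D\ge0$ and
\[
2\pi D\ \ge\ C_2\,\inf_{a\in\mathbb{R}^2}\|\omega-\1_{B(a,r)}\|_1^2 .
\]

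Set $D_0:=\tfrac{C_2}{2\pi}\min(1,\pi^2r^4)$. If $D\ge D_0$ the trivial bound $\|\omega-\1_{B(0,r)}\|_1\le\|\omega\|_1+\|\1_{B(0,r)}\|_1=2\pi r^2$ gives $\|\omega-\1_{B(0,r)}\|_1^4\le(2\pi r^2)^4D_0^{-1}D$ and we are done, so assume $D<D_0$. Then $\inf_a\|\omega-\1_{B(a,r)}\|_1^2\le\tfrac{2\pi}{C_2}D<\min(1,\pi^2r^4)$; the infimum is attained (the map $a\mapsto\|\omega-\1_{B(a,r)}\|_1$ is continuous and tends to $2\pi r^2$ as $|a|\to\infty$), and for a minimizer $a^*$ we set $\eta:=\|\omega-\1_{B(a^*,r)}\|_1\le\min(1,\pi r^2)$. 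Because $\omega\le1$ and $|B(a^*,r)|=\int\omega$, one has $\int_{B(a^*,r)}\omega=\pi r^2-\tfrac{\eta}{2}$ and $\int_{B(a^*,r)^c}\omega=\tfrac{\eta}{2}$. If $|a^*|>r$ then $|x|\ge|a^*|-r$ on $B(a^*,r)$, so
\[
I\ \ge\ \int_{B(a^*,r)}|x|^2\omega\ \ge\ (|a^*|-r)^2\Bigl(\pi r^2-\tfrac{\eta}{2}\Bigr)\ \ge\ \tfrac12\pi r^2(|a^*|-r)^2 ,
\]
and in all cases $|a^*|\le R_0:=r+\sqrt{2I/(\pi r^2)}$. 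Now put $R:=R_0+r+1$, so $B(a^*,r)\subset B(0,R)$ and $\int_{|x|>R}\omega\le\tfrac{\eta}{2}$. Using $\int x\,\omega=0$,
\[
\pi r^2|a^*|\ =\ \Bigl|\int x\,\1_{B(a^*,r)}\,dx\Bigr|\ =\ \Bigl|\int x\,(\1_{B(a^*,r)}-\omega)\,dx\Bigr|\ \le\ \int_{|x|\le R}|x|\,\bigl|\1_{B(a^*,r)}-\omega\bigr|\,dx+\int_{|x|>R}|x|\,\omega\,dx ,
\]
where the first term is $\le R\eta$ and the second, by Cauchy--Schwarz and $\omega\ge0$, is $\le\bigl(\int|x|^2\omega\bigr)^{1/2}\bigl(\int_{|x|>R}\omega\bigr)^{1/2}\le\sqrt{I}\sqrt{\eta/2}$. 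With $\eta\le1$ this yields $|a^*|\le C_1(I,r)\sqrt{\eta}$.

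To finish, combine the triangle inequality with the crude bound $\|\1_{B(a^*,r)}-\1_{B(0,r)}\|_1=2\,|B(a^*,r)\setminus B(0,r)|\le2\pi\bigl((|a^*|+r)^2-r^2\bigr)=2\pi|a^*|(|a^*|+2r)\le C'(I,r)|a^*|$, the inclusion $B(a^*,r)\setminus B(0,r)\subset B(0,|a^*|+r)\setminus B(0,r)$ and $|a^*|\le R_0$ being used in turn. Hence
\[
\|\omega-\1_{B(0,r)}\|_1\ \le\ \eta+C'(I,r)|a^*|\ \le\ \eta+C'(I,r)C_1(I,r)\sqrt{\eta}\ \le\ C_*(I,r)\sqrt{\eta}
\]
(using $\eta\le1$ again), so $\|\omega-\1_{B(0,r)}\|_1^4\le C_*(I,r)^4\eta^2\le\tfrac{2\pi}{C_2}C_*(I,r)^4\,D$. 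Taking $C(I,r):=\max\bigl((2\pi r^2)^4D_0^{-1},\ \tfrac{2\pi}{C_2}C_*(I,r)^4\bigr)$ gives the static estimate, and the theorem follows from the conservation laws above.

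The main obstacle is the bound $|a^*|\lesssim\sqrt{\eta}$. The center-of-vorticity identity pins $|a^*|$ down only through the first moment $\int|x|\,|\omega-\1_{B(a^*,r)}|\,dx$, and its tail $\int_{|x|>R}|x|\,\omega\,dx$ is \emph{not} controlled by $\eta=\|\omega-\1_{B(a^*,r)}\|_1$ alone --- an arbitrarily small amount of mass placed far enough out keeps this moment of order one while $\eta\to0$. Forbidding this is precisely the job of the angular-momentum hypothesis: Cauchy--Schwarz pairs the bounded second moment against the small excess mass $\int_{|x|>R}\omega\le\tfrac{\eta}{2}$ and forces the tail to be $O(\sqrt{\eta})$. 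This mechanism also explains the loss of a power relative to \cref{thm:mstability}: one controls $\|\omega-\1_{B(0,r)}\|_1$ only by $|a^*|+\eta\lesssim\sqrt{\eta}\sim D^{1/4}$, so it is the fourth power of the $L^1$ deviation, not the square, that the pseudo-energy deficit bounds (under $m$-fold symmetry the translation parameter is pinned at the origin by the symmetry, removing the square root).
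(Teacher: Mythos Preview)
Your proof is correct and follows essentially the same route as the paper: apply \cref{prop:pseudo-bound}, then use the center-of-vorticity and angular-momentum constraints (via Cauchy--Schwarz on the tail) to pin the optimal translate near the origin with $|a^*|\lesssim\sqrt{\eta}$, and conclude by conservation. The paper packages the static step as the separate \cref{prop:ibound} and handles the interior/exterior split of $\int x(\1_{B(a,r)}-\omega)$ slightly more directly (bounding the $B(a,r)$ part by $(|a|+r)\eta/2$ and using $|B(a,r)\setminus B(0,r)|\le 2r|a|$ rather than your annulus bound), but these are only cosmetic differences.
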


It should be noted the way in which these theorems extend the results of \cite{Tang_1987}. In both cases we removed the restriction on the initial $L^1$ distance and relaxed the patch assumption to $0 \leq \omega \leq 1$. In the case of $m$-fold symmetry, we further removed the bound's dependence on angular momentum and improved the exponent. \\

Returning to regularity loss, our main result is the construction of a simply connected vortex patch whose perimeter grows linearly in time. Absent the simply connected condition, Drivas-Elgindi-Jeong \cite{Drivas_2024} provided the first example of such a patch via a perturbation of the Rankine vortex having multiple connected components. Their result rested on an $L^2$ estimate for the stability of the differential shearing of trajectories that occurs with the Rankine patch. However, the major difficulty in applying this $L^2$ estimate towards perimeter growth is that the boundary is a zero measure set. \cite{Drivas_2024} handles this problem by having the perimeter enclose regions of both the patch and its complement, forcing the shearing to stretch the perimeter. \\

Such a method cannot extend to producing simply connected examples, however the stability of the differential shearing can still be exploited. The main concern to overcome is that somehow the bulk of the mass inside and outside the patch will be able to slide past the boundary as it shears, leaving the perimeter unchanged. To address this, we use \cref{thm:mstability} to construct a stable $L^1$ perturbation of the Rankine patch with arbitrarily large angular momentum. This forces some part of the perimeter to always be far from the origin, creating a natural barrier for points in the complement of the patch that are experiencing shearing. A topological argument involving the lifted dynamics to the universal cover of $\mathbb{R}^2 \setminus\{0\}$ shows how this forces shearing to occur on the perimeter. Roughly, the exterior of the patch in the universal cover can be split into buckets (see \cref{fig:decomp}) uniquely associated to each lift of the boundary, and the slow movement of the interfaces between buckets limits the rate at which points can move past the boundary of the patch. Putting it all together, we get our main result:

\begin{theorem}[Linear in Time Perimeter Growth]\label{thm:pgrowth}
    There exists a simply connected open set $\Omega_0 \subset \mathbb{R}^2$ (with smooth boundary) such that the vortex patch solution $\omega_t = \1_{\Omega_t}$ to 2D incompressible Euler with initial data $\omega_0 =\1_{\Omega_0}$ satisfies
    \[
        \text{Perimeter}(\Omega_t) \gtrsim t
    \]
    for $t$ sufficiently large.
\end{theorem}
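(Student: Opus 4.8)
The plan has four stages: build a good initial patch; use the $m$-fold stability theorem to pin $\omega_t$ near the Rankine vortex for all time while keeping a huge conserved angular momentum; upgrade $L^1$-closeness to closeness of the velocity field to rigid rotation; and finally run a topological argument in the universal cover of $\mathbb{R}^2\setminus\{0\}$ that converts the clash between the fast-rotating core and the far-reaching angular-momentum ``barrier'' into a linear-in-$t$ lower bound on the winding, hence the length, of $\partial\Omega_t$.

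\emph{Setup.} First I would fix $m=2$ and a large parameter $L$, and take $\Omega_0=\Omega_0(L)$ to be a smooth, simply connected, $2$-fold symmetric domain consisting of a disk $B(0,r')$ together with two thin tentacles (symmetric under $x\mapsto -x$) that run essentially radially out to radius $\approx L$ and are smoothly attached to the disk, with tentacle width $\sim L^{-1}$ and $r'$ chosen so $|\Omega_0|=\pi r^2$. Expanding the quadratic form $E$ about $\1_{B(0,r)}$ and estimating the logarithmic kernel shows, as $L\to\infty$, that $\|\1_{\Omega_0}-\1_{B(0,r)}\|_1=|\Omega_0\triangle B(0,r)|\lesssim L^{-1}$ and the pseudo-energy defect $E(\1_{B(0,r)})-E(\1_{\Omega_0})\lesssim L^{-1}\log L$, while $\int|x|^2\1_{\Omega_0}\gtrsim L$ since the tentacle mass sits near radius $L$. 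Choosing $L$ large, $\omega_0=\1_{\Omega_0}$ is $m$-fold symmetric with $\int\omega_0=\pi r^2$, so \cref{thm:mstability} gives $\|\omega_t-\1_{B(0,r)}\|_1\le\epsilon$ for all $t$, with $\epsilon$ as small as we wish. By Yudovich and Chemin, $\omega_t=\1_{\Omega_t}$ with $\Omega_t=\Phi_t(\Omega_0)$ simply connected and $\partial\Omega_t$ a $C^{1,\alpha}$ Jordan curve; uniqueness forces $\Phi_t$ to commute with $R_{2\pi/m}$, hence $\Phi_t(0)=0$; and $\int|x|^2\omega_t\equiv I$ is large. Two consequences: $|\Omega_t\setminus B(0,r)|=\tfrac12\|\omega_t-\1_{B(0,r)}\|_1\le\epsilon$, so this small-measure ``far part'' carries $\int|x|^2\ge I-\pi r^4$, forcing $\partial\Omega_t$ to reach radius $\ge\sqrt{(I-\pi r^4)/\epsilon}$ (the barrier), while $L^1$-closeness also makes $\partial\Omega_t$ meet $\{|x|\le 2r\}$. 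Finally, with $g=\omega_t-\1_{B(0,r)}$ ($\|g\|_\infty\le1$, $\|g\|_1\le\epsilon$, hence $\|g\|_{L^4}\le\epsilon^{1/4}$), splitting the Biot--Savart kernel at $|z|=1$ gives $\|u_t-u_*\|_{L^\infty}\lesssim\epsilon^{1/4}$, where $u_*$ is the rigid rotation with angular velocity $\Omega_*(\rho)=\tfrac12$ on $\{\rho\le r\}$ and $\tfrac{r^2}{2\rho^2}$ on $\{\rho\ge r\}$; so the true angular velocity at radius $\rho$ is $\Omega_*(\rho)+O(\epsilon^{1/4}/\rho)$, exceeding $\tfrac1{16}$ throughout $\{r/4\le|x|\le2r\}$ and lying below $\tfrac1{16}$ at any radius $R$ large, once $\epsilon$ is small.

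\emph{The topological step.} Because $\Phi_t(0)=0$, the flow lifts to $\widetilde\Phi_t$ on the universal cover $\widetilde M=\{(\theta,\rho):\theta\in\mathbb R,\ \rho>0\}$ of $\mathbb R^2\setminus\{0\}$ (with $\widetilde\Phi_0=\mathrm{id}$), and $\partial\Omega_t$ lifts to a single $2\pi$-periodic bi-infinite curve $\widetilde C_t$. On the fixed band $\{r\le\rho\le2r\}$ the two threads of each lifted tentacle cut the lifted exterior into \emph{buckets}: the components of $\widetilde E_t\cap\{r\le\rho\le2r\}$, ordered and indexed by $\mathbb Z$, each flanked by two arcs of $\widetilde C_t$ (the interfaces). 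I would then establish: (a) an exterior material point near the band can change its bucket index only by rounding the radial end of an interface, which requires an $O(r)$ radial excursion at radial speed $O(\epsilon^{1/4})$ — so its index changes by at most $C_1\epsilon^{1/4}t$ on $[0,t]$; and (b) the flow shears this band — angular velocity $\ge\tfrac1{16}$ near $\rho=r$ and, via the barrier, $\le\tfrac1{16}$ persisting out to radius $R$ — so the differential rotation between the core and the far part drives the slant of the buckets: the number of interfaces separating $\rho=2r$ from $\rho=r$ inside $\widetilde E_t$ grows at rate $\gtrsim1$, i.e.\ by $\gtrsim t$ over $[0,t]$. For $\epsilon$ small, (a) and (b) can coexist only if some interface arc itself sweeps angular variation $\gtrsim t$ across the band. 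Since for rectifiable $\Gamma\subset\mathbb R^2\setminus\{0\}$ one has $\mathrm{length}(\Gamma)\ge\int_\Gamma|x|\,|d(\arg x)|\ge(\inf_\Gamma|x|)\,TV_{\arg}(\Gamma)$, applying this to the sub-arc of $\partial\Omega_t$ projecting from such an interface (where $|x|\ge r$ and the angular variation is $\gtrsim t$) yields $\mathrm{Perimeter}(\Omega_t)\gtrsim rt\gtrsim t$ for $t$ large.

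\emph{The main obstacle} is precisely this topological step. Over times $t\to\infty$ the uniform $O(\epsilon^{1/4})$ velocity error integrates to an $O(\epsilon^{1/4}t)$ error in individual trajectories, so one cannot follow Lagrangian points; the reason for passing to the universal cover is to extract the quasi-integer winding/bucket-count quantity in (a)--(b), which is governed by the \emph{instantaneous} shear — always $\Omega_*$ up to a uniformly small \emph{fraction} of the order-one core rate — rather than by accumulated errors. Converting (a) and (b) into rigorous inequalities is where the real work lies: controlling how exterior points leak between buckets around the radial ends of interfaces (whose membership is itself dynamic), pinning the inner portion of the sheared thread at radius comparable to $r$ long enough to feel the order-one differential $\Omega_*(r)-\Omega_*(2r)$, and ruling out that the forced angular variation escapes to radii near $0$ where $u_t$ is only log-Lipschitz.
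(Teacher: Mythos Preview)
Your architecture---$m$-fold symmetric disk with long thin arms, \cref{thm:mstability} for all-time $L^1$ closeness, conserved angular momentum forcing $\partial\Omega_t$ to reach large radius, then a lift to the universal cover and a bucket decomposition---matches the paper's. But the mechanism you propose for the twisting step has a real gap, and it differs from the paper's in a way that matters.

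The paper does \emph{not} try to run the argument on $L^\infty$ velocity control alone. The key analytic input you are missing is the $L^2$ shearing-stability estimate of Drivas--Elgindi--Jeong,
\[
\|\Phi_\theta-\theta-t\,\mu(\Phi_r)\|_{L^2(\mathbb{R}^2)}\ \lesssim\ \epsilon\, t,
\]
which the paper quotes and feeds with the bounds of \cref{lem:velbounds}. From this and Chebyshev, the paper first locates two \emph{exterior} points $p_0,q_0$ (landing at time $T$ in the annuli $[1,2]$ and $[16,17]$ respectively) whose windings satisfy $\Phi_\theta(p_0,T)>T/8$ and $\Phi_\theta(q_0,T)<T/16$. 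Only then is the bucket argument used, and only to \emph{transfer} these windings to nearby points on $\partial\Omega_0$. Your plan skips this step and instead asserts (b) that ``the number of interfaces separating $\rho=2r$ from $\rho=r$ \dots\ grows at rate $\gtrsim 1$.'' That is essentially the conclusion: growth of the interface count is the perimeter growth. The heuristic ``differential rotation drives the slant'' is exactly what has to be proved, and as you yourself note, with only the pointwise bound $\|u-u_*\|_{L^\infty}\lesssim\epsilon^{1/2}$ the accumulated error on any individual trajectory is $O(\epsilon^{1/2}t)$, so you cannot pin the winding of any specific boundary (or exterior) point for large $t$. The $L^2$ estimate is what converts the instantaneous shear into usable information about winding on a set of positive measure.

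Your bucket decomposition is also placed at the wrong scale. The paper's buckets are cut by vertical rays from the \emph{arm tips} $M^n_t$, which sit at radius $\ge R\gg 1$ for all time by angular-momentum conservation; the whole point of \cref{lem:overtake} is that at those radii the angular speed is $O(1/R+\epsilon)$, so bucket walls drift by at most $O(1)$ in angle over a time step $\Delta\sim\min(R,\epsilon^{-1})$, giving $|N(T)|\lesssim(\max(1/R,\epsilon)+o(1))T$. By contrast, on your band $\{r\le\rho\le 2r\}$ the interfaces are boundary arcs moving at $O(1)$ angular speed, and your claim (a)---that an exterior point changes its bucket index only by ``rounding the radial end of an interface''---ignores that the interfaces are themselves being redrawn at each time and that bucket membership is not a material invariant there. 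Finally, the paper takes $m=3$ (not $2$) specifically to invoke the Elgindi bound $|u(x)|\lesssim|x|\|\omega\|_\infty$ needed in the $L^2$ estimates of \cref{lem:velbounds}; with $m=2$ that estimate is not available.
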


 A sketch of our patch is included in \cref{fig:patch}, where $N >> 1$. The simple shape of our patch could point to how the filaments that appeared in the simulations of Dritschel \cite{Dritschel_1988} contribute to continual perimeter growth. Interestingly, our method only relies on $m$-fold symmetry, large angular momentum, and pseudo-energy sufficiently close to that of the Rankine vortex to show linear in time perimeter growth for the boundary of the patch. 

\begin{figure}[htbp]
  \centering
  \includegraphics[width=0.3\linewidth]{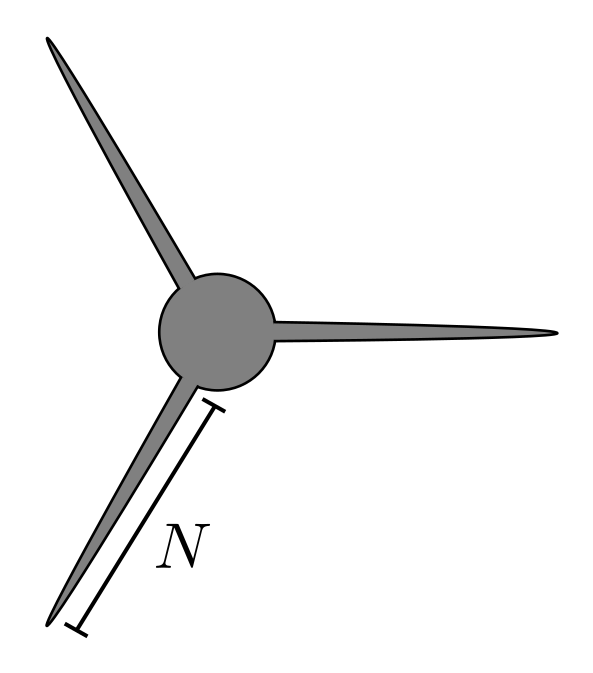}
  \caption{A rough sketch of $\Omega_0$}
  \label{fig:patch}
\end{figure}

\section{Acknowledgements}
The author thanks T. Elgindi for both introducing him to this problem as well as for his helpful discussion and comments. The author also thanks T. Drivas and I. Jeong for comments that have improved this paper. The author was partially supported by the NSF-DMS grant 2038056.

\section{Stability of the Rankine Vortex}
The core of the proofs of \cref{thm:mstability} and \cref{thm:istability} is contained in the following two propositions:
\begin{proposition}\label{prop:mbound}
    Suppose $\omega:\mathbb{R}^2 \to [0,1]$ satisfies $\omega = \omega \circ R_{2\pi/m}$ for some $m \geq 2$, and $\omega \in L^1(\mathbb{R}^2)$. Then
    \begin{align*}
        \inf_{a\in\mathbb{R}^2}||\omega-\1_{E^*+a}||_{L^1} \geq \frac13  ||\omega-\1_{E^*}||_{L^1}
    \end{align*}
    where $E^*$ is the ball centered at the origin with $|E^*| = \int_{\mathbb{R}^2} \omega(x)dx$.
\end{proposition}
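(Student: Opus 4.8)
The plan is to prove the stronger pointwise statement that $\|\omega - \1_{E^*}\|_{L^1} \le 3\,\|\omega - \1_{E^*+a}\|_{L^1}$ holds for \emph{every} $a \in \mathbb{R}^2$, and then take the infimum over $a$. Fixing $a$ and writing $D := \|\omega - \1_{E^*+a}\|_{L^1}$ and $R := R_{2\pi/m}$, the triangle inequality gives $\|\omega - \1_{E^*}\|_{L^1} \le D + \|\1_{E^*+a} - \1_{E^*}\|_{L^1}$, so it suffices to show $\|\1_{E^*+a} - \1_{E^*}\|_{L^1} \le 2D$; that is, the displaced ball $E^*+a$ is never much farther from the centered ball than $\omega$ is from $E^*+a$.

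The first real step uses the symmetry. Since $\omega\circ R = \omega$, since $RE^* = E^*$ (the ball is centered at the origin), and since $R$ preserves Lebesgue measure, the quantity $\|\omega - \1_{E^*+b}\|_{L^1}$ is unchanged when $b$ is replaced by $Rb$; iterating, $\|\omega - \1_{E^*+R^k a}\|_{L^1} = D$ for all $k \in \mathbb{Z}$. By the triangle inequality it follows that the translated balls are pairwise close: $\|\1_{E^*+a} - \1_{E^*+R^k a}\|_{L^1} \le 2D$ for every $k$.

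Two elementary ingredients then finish it. First, a geometric fact: $\|\1_{E^*+v} - \1_{E^*+w}\|_{L^1} = |E^* \triangle (E^*+(v-w))|$ depends on $v,w$ only through $|v-w|$ (rotational symmetry of a ball) and is non-decreasing in $|v-w|$ (the overlap $|E^*\cap(E^*+u)|$ shrinks as $|u|$ grows). Second, an averaging fact about rotations: for $m \ge 2$ one has $\sum_{k=0}^{m-1} R^k = 0$ as a linear map (multiply by the invertible operator $R-I$ and use $R^m = I$), so $\sum_{k=0}^{m-1}|a - R^k a|^2 = 2m|a|^2 - 2a\cdot\bigl(\sum_{k=0}^{m-1}R^k a\bigr) = 2m|a|^2$; dropping the vanishing $k=0$ term leaves $m-1$ terms summing to $2m|a|^2 \ge 2(m-1)|a|^2$, so some $k_0 \in \{1,\dots,m-1\}$ satisfies $|a - R^{k_0}a| \ge |a|$. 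Combining, monotonicity and the choice of $k_0$ give
\[
\|\1_{E^*} - \1_{E^*+a}\|_{L^1} = |E^* \triangle (E^*+a)| \le |E^* \triangle (E^*+(a - R^{k_0}a))| = \|\1_{E^*+a} - \1_{E^*+R^{k_0}a}\|_{L^1} \le 2D,
\]
which is exactly what was needed; feeding this back and taking the infimum over $a$ in $\|\omega-\1_{E^*}\|_{L^1}\le 3D$ proves the proposition.

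I do not expect a genuine obstacle here. The only points needing a moment of care are confirming that $|E^*\triangle(E^*+u)|$ is truly monotone in $|u|$ (clear from a picture, and formally because the autocorrelation of a ball is radially non-increasing) and checking that the averaging bound handles all $m\ge2$ uniformly — it does, in fact yielding $|a-R^{k_0}a|\ge\sqrt{2m/(m-1)}\,|a|\ge\sqrt2\,|a|$. The degenerate case $\int\omega = 0$ forces $\omega\equiv0$ and makes both sides vanish.
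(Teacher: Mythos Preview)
Your proof is correct and takes a genuinely different route from the paper's. The paper argues by inclusion--exclusion on the integrals of $\omega$ over unions of rotated balls: it picks an optimal center $a$ (using that the infimum is attained), then for even $m$ uses $B(a,r)\cap B(-a,r)\subset B(0,r)$ and for odd $m$ finds three rotates $a_1,a_2,a_3$ of $a$ with $\bigcap_i B(a_i,r)\subset B(0,r)$; combining $\int_{B(a_i,r)}\omega=\pi r^2-\epsilon/2$ with $\int_{B(0,r)}\omega=\pi r^2-\delta/2$ yields $\epsilon\ge\delta/2$ (even $m$) or $\epsilon\ge\delta/3$ (odd $m$). You instead fix an arbitrary $a$, use the triangle inequality to reduce to bounding $|E^*\triangle(E^*+a)|$, exploit the symmetry to get $\|\1_{E^*+a}-\1_{E^*+R^{k}a}\|_{L^1}\le 2D$ for every $k$, and then use monotonicity of $|E^*\triangle(E^*+u)|$ in $|u|$ together with the pigeonhole observation that some $|a-R^{k_0}a|\ge|a|$ (in fact $\ge\sqrt2\,|a|$). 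Your argument is more uniform --- no even/odd split, no need to know the infimum is attained, and it transparently extends to $\mathbb{R}^n$ --- while the paper's method is more hands-on and extracts the slightly better constant $1/2$ in the even case.
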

\begin{proposition}\label{prop:ibound}
    Let $I,r>0$. There exists $C= C(I,r) >0$ such that if $\omega:\mathbb{R}^2 \to [0,1]$ satisfies 
    \[
    \int |x|^2 \omega(x)dx \leq I, \int x\omega(x)dx =0, \text{ and } \int \omega(x)dx = \pi r^2,
    \]
    then
    \begin{align*}
        \inf_{a\in\mathbb{R}^2}||\omega-\1_{B(a,r)}||_{L^1} \geq C  ||\omega-\1_{B(0,r)}||_{L^1}^2
    \end{align*}
    Further, the exponent is sharp in this inequality.
\end{proposition}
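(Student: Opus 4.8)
The plan is to reduce the quadratic inequality to one quantitative fact: whenever a translate $\1_{B(a,r)}$ nearly attains the infimum $d:=\inf_a\|\omega-\1_{B(a,r)}\|_{L^1}$, its center must satisfy $|a|\lesssim_{I,r}\sqrt{d}$. Granting this, combining the triangle inequality
\[
\|\omega-\1_{B(0,r)}\|_{L^1}\le\|\omega-\1_{B(a,r)}\|_{L^1}+\|\1_{B(a,r)}-\1_{B(0,r)}\|_{L^1}
\]
with the elementary bound $\|\1_{B(a,r)}-\1_{B(0,r)}\|_{L^1}\lesssim r|a|$ yields $\|\omega-\1_{B(0,r)}\|_{L^1}\lesssim_{I,r}\sqrt{d}$, and squaring is the conclusion. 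I would first dispatch the easy regime $d\ge\epsilon_0$ for a constant $\epsilon_0=\epsilon_0(I,r)$ chosen below: there the inequality is automatic since $\int\omega=\pi r^2$ (and $\omega\ge0$) forces $\|\omega-\1_{B(0,r)}\|_{L^1}\le2\pi r^2$.

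To bound $|a|$ I would argue in two steps, fixing $a$ with $\|\omega-\1_{B(a,r)}\|_{L^1}\le2d$. First a soft bound from the angular momentum alone: by Chebyshev $\int_{|x|>M_0}\omega\le I/M_0^2$, so for $M_0=M_0(I,r)$ the ball $B(0,M_0)$ carries more than half the mass of $\omega$; if $|a|>M_0+r$ then $B(a,r)$ misses that mass, so $\|\omega-\1_{B(a,r)}\|_{L^1}\ge\tfrac12\pi r^2$, which is excluded once $\epsilon_0\le\tfrac14\pi r^2$. Hence $|a|\le M_0+r=:A_0(I,r)$. Second, the quantitative bound via centers of vorticity: since $\omega$ has center of vorticity $0$ while $\1_{B(a,r)}$ has center of vorticity $\pi r^2a$,
\[
\pi r^2|a|=\Bigl|\int x\bigl(\1_{B(a,r)}-\omega\bigr)\,dx\Bigr|\le\int|x|\,\bigl|\1_{B(a,r)}-\omega\bigr|\,dx,
\]
and I would split this integral at a radius $M\ge A_0+r$: the part over $\{|x|\le M\}$ is $\le 2Md$, while over $\{|x|>M\}$ the indicator drops out (as $B(a,r)\subset B(0,M)$) and $\int_{|x|>M}|x|\,\omega\le M^{-1}\int|x|^2\omega\le I/M$. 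This gives $\pi r^2|a|\lesssim Md+I/M$; optimizing $M\sim\sqrt{I/d}$ — legitimate once $\epsilon_0$ is small enough that $\sqrt{I/d}\ge A_0+r$ — produces $|a|\lesssim_{I,r}\sqrt{d}$. The step I expect to be the crux is precisely this truncation of the unbounded weight $|x|$: the tension between taking $M$ large to suppress the tail of $\omega$ and keeping $Md$ small is what degrades the linear estimate one would get for a compactly supported $\omega$ into the square; the rest is bookkeeping, provided $M_0,A_0,\epsilon_0$ are fixed in that order, all in terms of $I,r$.

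For the sharpness claim I would exhibit a one-parameter family saturating the exponent. Fix a unit vector $e$; for small $\rho>0$ set
\[
\omega_\rho=\1_{B(\rho e,\,r-\eta)}+\1_{B(p,\,s)},
\]
with $s$ chosen so that $\pi s^2=\pi r^2-\pi(r-\eta)^2$, hence $\int\omega_\rho=\pi r^2$, and $p=-\tfrac{(r-\eta)^2}{s^2}\rho e$, hence $\int x\,\omega_\rho=0$. Taking $\eta=\eta(I,r)\,\rho^2$ keeps the far blob's angular momentum $|B(p,s)|\,|p|^2\asymp_{I,r}\rho^2/\eta$ inside the remaining budget $I-\tfrac{\pi}{2}r^4$ (this is where one uses $I>\tfrac{\pi}{2}r^4$, the minimal angular momentum; if $I=\tfrac{\pi}{2}r^4$ both sides vanish and there is nothing to show), so $\int|x|^2\omega_\rho\le I$ for all small $\rho$, and then the far blob — at distance $\asymp1/\rho$ and of radius $\asymp\rho$ — is disjoint from both $B(0,r)$ and $B(\rho e,r)$. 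One checks $\|\omega_\rho-\1_{B(0,r)}\|_{L^1}\asymp_r\rho$ (the translated disk protrudes over an arc of area $\asymp_r\rho$), while the competitor $a=\rho e$ gives $\inf_a\|\omega_\rho-\1_{B(a,r)}\|_{L^1}\le\|\omega_\rho-\1_{B(\rho e,r)}\|_{L^1}\asymp_{I,r}\eta\asymp_{I,r}\rho^2$. Therefore $\inf_a\|\omega_\rho-\1_{B(a,r)}\|_{L^1}\big/\|\omega_\rho-\1_{B(0,r)}\|_{L^1}^{\,p}\asymp_{I,r}\rho^{\,2-p}\to0$ for every $p<2$, so no exponent below $2$ is admissible even with a constant depending on $I$ and $r$.
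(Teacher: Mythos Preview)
Your proof is correct and follows the same two-step skeleton as the paper: first bound the near-optimal center by $|a|\lesssim_{I,r}\sqrt{d}$ using the vanishing center of vorticity together with the second-moment bound, then convert this into $\|\omega-\1_{B(0,r)}\|_{L^1}\lesssim\sqrt{d}$; your sharpness construction is likewise the same shifted-disk-plus-far-blob idea. The only tactical difference is in how the $|a|$ bound is extracted: the paper splits $\pi r^2 a=\int_{B(a,r)}x(1-\omega)-\int_{B(a,r)^c}x\omega$ and applies Cauchy--Schwarz directly to the second term to get $I^{1/2}(\epsilon/2)^{1/2}$ in one stroke, whereas you truncate the weight $|x|$ at a radius $M$ and optimize $M\sim\sqrt{I/d}$ --- both give the same $\sqrt{d}$ and the rest of the argument is essentially identical.
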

The proofs are fairly straightforward and are included in the appendix. Our stability theorems for the Rankine vortex patch are an easy consequence of these propositions and the conserved quantities of 2D Euler. We will only present the proof for \cref{thm:mstability}, as the proof for \cref{thm:istability} is essentially identical.

\begin{theorem*}[Restatement of \cref{thm:mstability}]
    There exists a universal constant $C>0$, such that if $\omega_0: \mathbb{R}^2 \to [0,1]$ satisfies $\omega_0 = \omega_0 \circ R_{2\pi/m}$ for some $m \geq 2$ and $\int \omega_0(x)dx = \pi r^2$, then the solution to 2D incompressible Euler having initial data $\omega_0$ satisfies
    \begin{align*}
        ||\omega_t - \1_{B(0,r)}||^2_1 \leq C(E(\1_{B(0,r)}) -E(\omega_0))
    \end{align*}
    for all time.
\end{theorem*}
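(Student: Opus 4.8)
The plan is to apply \cref{prop:pseudo-bound} and \cref{prop:mbound} to $\omega_t$ at each fixed time $t$, and then use the conserved quantities of 2D Euler to transport the resulting inequality back to the initial data $\omega_0$. First I would record three facts about the Yudovich solution $\omega_t$ with initial data $\omega_0$. (i) The pointwise bound $0 \le \omega_t \le 1$ is preserved, since $\omega$ solves a transport equation along a measure-preserving flow. (ii) The total vorticity $\int \omega_t\,dx = \pi r^2$ is conserved, so the optimal ball in \cref{prop:pseudo-bound} is exactly $E^* = B(0,r)$ for every $t$. (iii) The $m$-fold symmetry propagates, i.e. $\omega_t = \omega_t \circ R_{2\pi/m}$ for all $t$; this follows from the rotation invariance of the Euler equations together with uniqueness in the Yudovich class, since $\omega_t \circ R_{2\pi/m}$ solves the same equation with the same (rotation-invariant) initial data.

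With these in hand, fix $t$ and apply \cref{prop:pseudo-bound} with $n = 2$ and $E^* = B(0,r)$ to $\omega_t$:
\begin{align*}
    \iint_{B(0,r)\times B(0,r)}& \ln(|x-y|^{-1})\,dx\,dy - \iint_{\mathbb{R}^2\times\mathbb{R}^2} \omega_t(x)\ln(|x-y|^{-1})\omega_t(y)\,dx\,dy \\
    &\ge C_2 \inf_{a\in\mathbb{R}^2}\|\omega_t - \1_{B(0,r)+a}\|_{L^1}^2 .
\end{align*}
By definition of the pseudo-energy, the left-hand side equals $2\pi\bigl(E(\1_{B(0,r)}) - E(\omega_t)\bigr)$, and $E(\omega_t) = E(\omega_0)$ since the pseudo-energy is conserved. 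On the right-hand side, \cref{prop:mbound} applied to the $m$-fold symmetric function $\omega_t$ gives $\inf_{a}\|\omega_t - \1_{B(0,r)+a}\|_{L^1} \ge \tfrac13\|\omega_t - \1_{B(0,r)}\|_{L^1}$. Combining these yields $2\pi\bigl(E(\1_{B(0,r)}) - E(\omega_0)\bigr) \ge \tfrac{C_2}{9}\|\omega_t - \1_{B(0,r)}\|_{L^1}^2$, which is the claimed estimate with the universal constant $C = 18\pi/C_2$.

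The routine points requiring care — and the closest thing to an obstacle — all concern making sense of the conserved quantities in the Yudovich class rather than for smooth solutions: conservation of total vorticity, conservation of the pseudo-energy $E$, and the propagation of $R_{2\pi/m}$-symmetry should each be justified by an approximation/uniqueness argument (or cited). One genuine edge case is finiteness: since $\ln(|x-y|^{-1})$ changes sign, $E(\omega_0)$ could a priori be $-\infty$ if the mass of $\omega_0$ is too spread out; but in that case $E(\1_{B(0,r)}) - E(\omega_0) = +\infty$ and the inequality is vacuous, so we may assume $E(\omega_0) > -\infty$, after which every quantity above is finite. Note that, unlike in Tang's argument, no compact support or angular-momentum bound on $\omega_0$ enters anywhere — the $m$-fold symmetry in \cref{prop:mbound} does all the work of controlling the translation infimum — which is precisely the improvement claimed after the theorem statement.
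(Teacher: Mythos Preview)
Your proof is correct and follows essentially the same approach as the paper: apply \cref{prop:pseudo-bound} to $\omega_t$, use \cref{prop:mbound} to replace the translation infimum by the centered $L^1$ distance, and invoke conservation of pseudo-energy and $m$-fold symmetry. If anything you are more careful than the paper in tracking the $2\pi$ from the definition of $E$, in justifying why the hypotheses of the two propositions persist in time, and in handling the $E(\omega_0)=-\infty$ edge case.
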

\begin{proof}
    $\omega_0$ satisfies the conditions of \cref{prop:mbound}, and these conditions are conserved for vorticity solutions to 2D incompressible Euler. It follows that 
    \begin{align*}
        ||\omega_t-\1_{B(0,r)}||_{L^1}^2 \leq 9 \inf_{a\in \mathbb{R}^2} ||\omega_t-\1_{B(a,r)}||^2
    \end{align*}
    From \cref{prop:pseudo-bound}, we know there exists $C' >0$ such that
    \begin{align*}
        \inf_{a\in \mathbb{R}^2} ||\omega_t-\1_{B(a,r)}||^2 \leq C'(E(\1_{B(0,r)})-E(\omega_t))
    \end{align*}
    Altogether, we get
    \begin{align*}
        ||\omega_t-\1_{B(0,r)}||_{L^1}^2 &\leq C(E(\1_{B(0,r)})-E(\omega_t)) \\
        &= C(E(\1_{B(0,r)})-E(\omega_0))
    \end{align*}
    since the pseudo-energy is conserved.
\end{proof}

\section{Perimeter Growth of a Simply Connected Vortex Patch}

Our proof of \cref{thm:pgrowth} follows three steps. First, we use the stability under $m$-fold symmetry given by \cref{thm:mstability} to construct an initial vortex patch with enough angular momentum to ensure large radial support for all time, but that stays close enough to the disk in $L^1$ to power the methods of \cite{Drivas_2024}. They showed that the problem of perimeter growth can be reduced to finding two points on the boundary of $\partial \Omega_t$, sufficiently far from the origin, whose trajectories have each winded around the origin vastly different amounts. Instead of working directly with $\partial \Omega_t$, our second step is to find points in $\Omega^c_t$ that accomplish this difference in winding. The final step is to use the approximation of our velocity field by the purely rotational velocity of the Rankine vortex to show that points in $\Omega^c_t$ are always close in winding number to some point on $\partial \Omega_t$. Putting that all together, we get \cref{thm:pgrowth}.

\begin{theorem*}[Restatement of \cref{thm:pgrowth}]
    There exists a simply connected open set $\Omega_0 \subset \mathbb{R}^2$ (with smooth boundary) such that the vortex patch solution $\omega_t = \1_{\Omega_t}$ to 2D incompressible Euler with initial data $\omega_0 =\1_{\Omega_0}$ satisfies
    \[
        \text{Perimeter}(\Omega_t) \gtrsim t
    \]
    for $t$ sufficiently large.
\end{theorem*}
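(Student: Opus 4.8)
\emph{Step 1 (the initial patch).} Following the three-step program sketched above, I would first take $\Omega_0$ to be $B(0,1)$ with a small notch cut from its boundary (so that $|\Omega_0|=\pi$) together with $m$ thin radial tentacles of width $w$ and length $L$ attached at the angles $2\pi k/m$, $k=0,\dots,m-1$, with the corners rounded so that $\partial\Omega_0$ is smooth and $\Omega_0$ is simply connected and $m$-fold symmetric. The tentacles and notch give $\|\omega_0-\1_{B(0,1)}\|_{L^1}\asymp mwL=:\delta$, and since the symmetric difference is bounded and supported in $B(0,L+1)$ one checks $E(\1_{B(0,1)})-E(\omega_0)\lesssim\delta\log L$, while $\int|x|^2\omega_0\,dx\gtrsim mwL^3$. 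Choosing, say, $\delta\sim1/L$ with $w$ exponentially small and $m$ correspondingly huge makes the pseudo-energy deficit tend to $0$ while $\int|x|^2\omega_0\,dx\to\infty$, so \cref{thm:mstability} yields $\sup_t\|\omega_t-\1_{B(0,1)}\|_{L^1}\le\delta'$ with $\delta'$ as small as we please. Conservation of angular momentum then forces, for every $t$, the patch $\Omega_t$ to contain points at radius $\ge R_*\gtrsim\bigl(\delta'^{-1}\int|x|^2\omega_0\,dx\bigr)^{1/2}$ (the $\le\delta'$ worth of mass lying outside $B(0,1)$ must reach out that far to carry the fixed angular momentum); we arrange $R_*$ to be enormous but still $\ll L$. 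Finally, since $\omega_t$ is $m$-fold symmetric with $m\ge2$ the origin is a stagnation point, so $\Phi_t$ restricts to $\mathbb{R}^2\setminus\{0\}$ and $0\in\Omega_t$ for all $t$.

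\emph{Steps 2--3 (the mechanism).} Next I would lift the dynamics to the universal cover $\mathbb{R}\times(0,\infty)$ of $\mathbb{R}^2\setminus\{0\}$, with deck map $\theta\mapsto\theta+2\pi$ and the $m$-fold symmetry lifting to $\theta\mapsto\theta+2\pi/m$. Inside a radial slab $\{\tfrac{1}{2}<\rho<R_*\}$ the lifted boundary pieces coming from the tentacles cut the lifted complement into ``buckets'' of angular width exactly $2\pi/m$ (all the walls differ by multiples of $2\pi/m$, by the symmetry), and these buckets are carried rigidly by the flow; since a complement point can never cross the patch, it stays in its bucket for all time. Writing $u_t=u_*+(u_t-u_{*,t})$ with $u_*$ the Rankine velocity (purely azimuthal, angular speed $\tfrac{1}{2}$ inside the disk and $\tfrac{1}{2\rho^2}$ outside), Biot--Savart gives $\|u_t-u_{*,t}\|_{L^\infty}\to0$, so every trajectory has small radial velocity and, by the Lagrangian $L^2$ shearing estimate of \cite{Drivas_2024}, most trajectories shear essentially as under $u_*$. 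Combined with the $2\pi/m$ rigidity of the buckets, this pins the bucket walls: the inner end of a tentacle boundary arc (attached to the bulk, radius $\asymp1$) winds at angular rate $\asymp\tfrac{1}{2}$ and so accumulates lifted angle $\gtrsim t$, while its outer end (radius $\ge R_*$) barely winds. That boundary arc $T_t$ is a simple sub-arc of the Jordan curve $\partial\Omega_t$, so it lifts to a simple arc $\widetilde T_t$ in the cover, lying in $\{\rho\ge\tfrac{1}{2}\}$ and with $\theta$-coordinate sweeping an interval of length $\gtrsim t$; hence $\text{length}(T_t)=\text{length}(\widetilde T_t)\ge\tfrac{1}{2}\cdot(\text{that length})\gtrsim t$, and therefore $\text{Perimeter}(\Omega_t)\ge\text{length}(T_t)\gtrsim t$.

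\emph{The main obstacle.} Everything hinges on the radial control in Step 3, uniform in $t$ and valid on the measure-zero set $\partial\Omega_t$. The crude bound $|\dot\rho|\lesssim\|u_t-u_{*,t}\|_{L^\infty}$ only controls a trajectory for time $O\bigl(1/\|u-u_*\|_{L^\infty}\bigr)$, whereas the conclusion must hold for all large $t$; and the $L^2$ shearing estimate governs almost every trajectory but says nothing directly about the nullset $\partial\Omega_t$. The bucket decomposition is precisely the bridge: the walls bound positive-measure buckets whose bulk is controlled by the $L^2$ estimate, the bucket widths are frozen at $2\pi/m$ by the symmetry (so controlling most of a bucket pins the whole bucket, hence its walls), and conservation of angular momentum forces the tentacles genuinely out to radius $R_*$, so that the shearing is imposed on the boundary itself rather than sliding past it. The most delicate point inside this --- and the one I expect to cost the most work --- is ruling out that $\partial\Omega_t$ sends a thin channel of complement in toward the stagnation point at the origin, since the step from ``total variation of $\theta$'' to ``arclength'' needs $\widetilde T_t$ to stay a definite distance from $0$.
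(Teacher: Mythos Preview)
Your plan shares the paper's ingredients (the $m$-fold stability theorem, conservation of angular momentum forcing large radial support, the universal cover, a bucket decomposition, and the $L^2$ shearing estimate from \cite{Drivas_2024}), but the execution of Step~3 has a genuine gap, and the paper's argument is organized quite differently at exactly this point.

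The gap is in your transfer from bulk to boundary. You propose to track a fixed tentacle boundary arc $T_t$ and argue that its inner end (``radius $\asymp 1$'') winds at rate $\asymp\tfrac12$ while its outer end barely winds. But nothing keeps the inner end at radius $\asymp 1$: the pointwise bound $|\dot\rho|\lesssim\|u-u_*\|_\infty$ only controls it for time $O(\epsilon^{-1})$, as you yourself note, and there is no substitute for large $t$. Your proposed substitute is the sentence ``the bucket widths are frozen at $2\pi/m$ by the symmetry (so controlling most of a bucket pins the whole bucket, hence its walls)'', but this is not correct as stated. The $m$-fold symmetry says the flow-image buckets $\Phi_t(B_k^0)$ are \emph{translates of one another} by $2\pi/m$ in $\theta$; it does \emph{not} say each has angular extent $2\pi/m$. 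Under the shear each bucket becomes a long thin spiral whose $\theta$-projection can have length $\sim t$, and knowing where ``most'' of that spiral sits (via the $L^2$ estimate) does not locate its measure-zero boundary walls. So the inference ``bulk controlled $\Rightarrow$ walls pinned'' is the missing step, not a consequence of symmetry.

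The paper avoids tracking any boundary point through time. At a fixed large $T$ it uses Chebyshev on the $L^2$ shearing estimate to produce \emph{complement} points $p_0,q_0\in\Omega_T^c$ at radii in $[1,2]$ and $[16,17]$ with $\Phi_\theta(p_0,T)>T/8$ and $\Phi_\theta(q_0,T)<T/16$. The buckets are \emph{not} taken as flow images of initial buckets; instead, at each time $t$ one draws vertical rays in the cover from the current radial maximizers $M_t^n$ of the lifted boundary (these satisfy $\pi_r(M_t^n)>R$ by angular momentum). The key Lemma then says that any complement trajectory changes bucket label by at most $O(1)$ over a time window $\Delta\sim\min(R,\epsilon^{-1})$, and this is proved using only the \emph{pointwise} velocity bounds $|u_r|\lesssim\epsilon$ and $|u_\theta|\lesssim(1+|x|)^{-1}+\epsilon$ at radius $\geq R$, where the bucket walls live. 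Hence $|N(T)|\lesssim 1+\max(R^{-1},\epsilon)T$. Finally, since $\Phi_r(p_0,T),\Phi_r(q_0,T)<20<R$, each of $P_T,Q_T$ has a point of the lifted boundary directly to one side at the \emph{same} height, in the same or an adjacent bucket period; translating back by $N(T)$ periods yields boundary points $p,q$ with $\pi_r\geq 1$ and $\pi_\theta(q)<c_0T<c_1T<\pi_\theta(p)$, to which the geometric length lemma applies. Note in particular that $m=3$ suffices; there is no need to send $m\to\infty$.
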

\subsection{The Setup}
Consider a 3-fold symmetric, simply connected set $\Omega_0$ consisting of a disk $B(0,1^-)$ with three thin arms of length $N>>1$ and combined area of $\gamma<<1$, as depicted in \cref{fig:patch}. Further, we require $\omega_0=\1_{\Omega_0}$ satisfies
\begin{enumerate}
    \item $\int \omega_0 = \pi$
    \item $\int |x|^2 \omega_0=: 1 +\int_{B(0,R)} |x|^2 = 1+\frac{2\pi}{3}R^3 \sim  1+\gamma N^2$
    \item $E(\1_{B(0,1)})-E(\omega_0) \lesssim \gamma \ln(N) =: \delta$
\end{enumerate}

Clearly for any $R>>1$ and $\delta<<1$, there exists $\gamma$ and $N$ so that the last two properties hold. \\

If we take $\omega_0$ as our initial vortex patch, \cref{thm:mstability} gives that for any $\tilde \epsilon > 0$, we can choose $\delta << 1$ so that $\omega_t$ satisfies
\begin{align*}
    ||\omega_t -\1_{B(0,1)}||_{L^1} < \tilde\epsilon
\end{align*}
for all time. Further, conservation of angular momentum gives that for all time,
\begin{align*}
    \Omega_t \setminus B(0,R) \neq \emptyset.
\end{align*}
Following the estimates in Lemma 3.19 of \cite{Drivas_2024}, we can show the following lemma: 
\begin{lemma}\label{lem:velbounds}
    If $u = u_r e_r + ru_\theta e_\theta$ is the velocity induced by $\omega_t$, and $u_* = r\mu(r)e_\theta$ is the velocity induced by $\1_{B(0,1)}$, then:
    \begin{enumerate}
        \item $||u-u_*||_{L^\infty} \lesssim \tilde \epsilon^{1/2}$ \label{eq:uniform}
        \item $||u_\theta||_{L^\infty} \lesssim1 $
        \item $||u_\theta - \mu||_{L^2} \lesssim \ln(R)\tilde\epsilon^{1/4}$
        \item $||\frac1r u_r||_{L^2} \lesssim \ln(R) \tilde\epsilon^{1/4}$
    \end{enumerate}
\end{lemma}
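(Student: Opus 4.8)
The plan is to set $f := \omega_t - \1_{B(0,1)}$ and $v := u - u_* = K * f$, and to run the estimates on the information that persists for all time: $\|f\|_{L^1} \le \tilde\epsilon$ (from \cref{thm:mstability} and the choice of $\omega_0$), $\|f\|_{L^\infty} \le 1$ (both are patches), the $m$-fold symmetry of $\omega_t$ (here $m = 3$) and hence the equivariance $v \circ R_{2\pi/m} = R_{2\pi/m} \circ v$, the conserved bounds $\int |x|^2 \omega_t\,dx \lesssim R^3$ and $E(\1_{B(0,1)}) - E(\omega_t) = E(\1_{B(0,1)}) - E(\omega_0) \lesssim \delta$, and the crude universal bound $\|u\|_{L^\infty} \lesssim (\|\omega_t\|_{L^1}\|\omega_t\|_{L^\infty})^{1/2} \lesssim 1$. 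The structural fact at the origin is the following: since $u(0) = 0$ for all $t$ (the only vector fixed by $R_{2\pi/m}$, $m \ge 2$, is $0$), the origin is a fixed point of the particle flow, so $0 \in \Omega_t$ and $\omega_t(0) = 1$; hence $f \equiv 0$ near $0$, and the equivariance forces not only $v(0) = 0$ but also $\nabla v(0) = 0$ (being trace-free and, by $m$-fold symmetry with $m \ge 3$, a multiple of the rotation-by-$\tfrac{\pi}{2}$ matrix, $\nabla v(0)$ has $\operatorname{curl}$ equal to $f(0) = 0$, so it vanishes). The argument below parallels Lemma 3.19 of \cite{Drivas_2024}.

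Bound (1) is the classical potential estimate: using $|K(z)| \le \tfrac{1}{2\pi|z|}$, split $v(x) = (K*f)(x)$ at radius $\rho$, so that the inner part is $\le \|f\|_{L^\infty}\rho$ and the outer part $\le \tfrac{1}{2\pi\rho}\|f\|_{L^1}$; optimizing $\rho \sim (\|f\|_{L^1}/\|f\|_{L^\infty})^{1/2}$ gives $\|v\|_{L^\infty} \lesssim \tilde\epsilon^{1/2}$. For (2), away from the origin one writes $|u_\theta(x)| = |u(x)\cdot e_\theta|/|x| \le \|u\|_{L^\infty}/|x| \lesssim 1$ for $|x| \ge \tfrac12$; near the origin, the double vanishing $v(0) = 0$, $\nabla v(0) = 0$ together with the log-Lipschitz continuity of $\nabla u$ (hence of $\nabla v$) gives $|v(x)| \lesssim |x|^2 \ln\tfrac1{|x|}$ for $|x| < \tfrac12$, so $|u_\theta - \mu| = |v\cdot e_\theta|/|x| \lesssim |x|\ln\tfrac1{|x|} \to 0$ and hence $|u_\theta| \lesssim 1$ there as well.

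For (3) and (4), since $u_* = r\mu(r)e_\theta$ is purely azimuthal we have $u_\theta - \mu = \tfrac1r\,v\cdot e_\theta$ and $\tfrac1r u_r = \tfrac1r\,v\cdot e_r$, so both reduce to estimating $\int \tfrac{|v|^2}{|x|^2}\,dx$. On $\{|x| < 1\}$, combining $|v(x)| \lesssim |x|^2\ln\tfrac1{|x|}$ near the origin with $\|v\|_{L^\infty} \lesssim \tilde\epsilon^{1/2}$ and subdividing at $|x| \sim \tilde\epsilon^{1/4}$ gives $\int_{|x|<1}\tfrac{|v|^2}{|x|^2}\,dx \lesssim \tilde\epsilon\,(\ln\tfrac{R}{\tilde\epsilon})^2$ (the $\ln R$ entering only through the support diameter in the log-Lipschitz modulus). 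On $\{|x| \ge 1\}$ the weight is $\le 1$, so we need $\|v\|_{L^2(\mathbb{R}^2)}^2$; the identity $E(\1_{B(0,1)}) - E(\omega_t) = -2\int f\,\psi_* - \|v\|_{L^2}^2$ (with $-\Delta\psi_* = \1_{B(0,1)}$; legitimate because $\int f = 0$ makes $(-\Delta)^{-1}f$ decay fast enough to integrate by parts) gives $\|v\|_{L^2}^2 \le -2\int f\,\psi_*$, and splitting this at $|x| = 1$ the inside contributes $\lesssim \tilde\epsilon$ (as $|\psi_*| \le \tfrac14$ there) while the outside equals $\int_{|x|\ge1}\omega_t\ln|x|\,dx$, which is $\lesssim \tilde\epsilon\ln R + \tilde\epsilon\ln\tfrac1{\tilde\epsilon}$ by an elementary interpolation using $\int_{|x|\ge1}\omega_t \le \tilde\epsilon$, $\omega_t \le 1$, and $\int|x|^2\omega_t \lesssim R^3$. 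Collecting, $\|u_\theta - \mu\|_{L^2} + \|\tfrac1r u_r\|_{L^2} \lesssim \tilde\epsilon^{1/2}\ln\tfrac{R}{\tilde\epsilon}$, which for $\tilde\epsilon$ small is $\lesssim \ln(R)\,\tilde\epsilon^{1/4}$.

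The main obstacle is the origin. For a generic perturbation with center of vorticity at the origin one still has $u(0) \ne 0$ and $\nabla v(0) \ne 0$, and then $u_\theta$ and both weighted $L^2$ norms would in fact be \emph{infinite}; the estimates genuinely rest on the $m$-fold symmetry, which makes the origin a fixed point of the flow (so $\omega_t(0) = 1$) and forces the double vanishing $v(0) = \nabla v(0) = 0$. Converting this double vanishing into the quantitative near-origin bounds above, using only the log-Lipschitz control available for a patch (which is not $C^2$), is the delicate point, and is exactly what Lemma 3.19 of \cite{Drivas_2024} supplies. A secondary subtlety is that the patch can have radial support out to $\sim N \sim R^{3/2}$, so the crude $L^\infty$ bound on $v$ alone is too weak for the exterior $L^2$ estimate; the clean $\ln R$ there comes instead from the pseudo-energy identity together with the angular-momentum bound, entering through $\int_{|x|\ge1}\omega_t\ln|x|\,dx$.
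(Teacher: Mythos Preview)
Your treatment of (1) matches the paper. Your far-field argument for (3)--(4) via the pseudo-energy identity $E(\1_{B(0,1)})-E(\omega_t)=-2\int f\psi_*-\|v\|_{L^2}^2$ is a genuinely different and valid route from the paper's: the paper instead uses the pointwise decay $|u(x)|\lesssim(1+|x|)^{-1}\big(\|(1+|y|^2)\omega\|_{L^1}+\|\omega\|_{L^\infty}\big)$ from \cite{Drivas_2024}, splits $\int|x|^{-2}|v|^2$ into three annuli $\{|x|<A\}$, $\{A<|x|<B\}$, $\{|x|>B\}$, bounds them by $A^2$, $\tilde\epsilon\ln(B/A)$, $R^6B^{-2}$, and optimizes $A=\tilde\epsilon^{1/2}$, $B=R^6\tilde\epsilon^{-1/2}$. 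Your energy route is arguably sharper.

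The gap is in your near-origin treatment, which feeds both (2) and the inner part of (3)--(4). The claim that $\nabla u$ is log-Lipschitz is false for general $\omega\in L^\infty$ (one only has $u$ log-Lipschitz and $\nabla u\in BMO$); even granting Chemin's $C^{1,\alpha}$ regularity for smooth patches, those norms may grow doubly exponentially in $t$, so the bound $|v(x)|\lesssim|x|^2\ln\tfrac1{|x|}$ you extract from $v(0)=\nabla v(0)=0$ carries a constant that is \emph{not} uniform in time. Equivalently, ``$f\equiv0$ near $0$'' is true but only on a ball whose radius you have not controlled from below uniformly in $t$, and your harmonic/Taylor estimates degenerate with that radius. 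The paper sidesteps all of this by quoting the purely static cancellation estimate of Elgindi \cite{elgindi2016remarksfunctionsboundedlaplacian}: for $m$-fold symmetric $\omega$ with $m\ge3$, one has $|u(x)|\lesssim|x|\,\|\omega\|_{L^\infty}$. Since $\|\omega_t\|_{L^\infty}=1$ for all $t$, this gives $|u_\theta|\le|u|/|x|\lesssim1$ uniformly (which is (2)), and on the inner annulus $\{|x|<A\}$ it gives $|v(x)|\le|u(x)|+|u_*(x)|\lesssim|x|$, hence $\int_{|x|<A}|x|^{-2}|v|^2\lesssim A^2$, again uniformly. Replacing your second-order vanishing argument by this first-order Elgindi bound closes the proof; the second-order information $\nabla v(0)=0$ is neither needed nor, as you have stated it, time-uniform.
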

\begin{proof}
    Statement $1$ is standard and follows immediately from
    \begin{align*}
        ||u-u_*||_{L^\infty} &\leq 2(||\omega_t-\1_{B(0,1)}||_{L^1}||\omega_t-\1_{B(0,1)}||_{L^\infty})^{1/2} \\
        &\lesssim \tilde \epsilon^{1/2}
    \end{align*}
    For Statement $2$, we recall from Elgindi \cite{elgindi2016remarksfunctionsboundedlaplacian} that under $3$-fold symmetry, 
    \begin{align}
        |u(x)|\lesssim|x|||\omega_t||_{L^\infty} \label{eq:near}
    \end{align}
    This gives
    \begin{align*}
        ||u_\theta||_{L^\infty} \leq ||u(x)/|x|||_{L^\infty} \lesssim ||\omega_t||_{L^\infty} = 1
    \end{align*}
    For the last two Statements, we recall from \cite{Drivas_2024} that
    \begin{align}
        |u(x)|\lesssim (1+|x|)^{-1} \big(||(1+|y|^2)\omega||_{L^1} + ||\omega||_{L^\infty}\big) \label{eq:far}
    \end{align}
    Note that it suffices to show the stated bounds for $|||x|^{-1}|u-u_*|||_{L^2}$, since
    \begin{align*}
        |\frac1r u_r|, |u_\theta-u| \leq |x|^{-1}|u-u_*|
    \end{align*}
    Picking cutoffs $0<A<B$, we have
    \begin{align*}
        |||x|^{-1}|u-u_*|||^2_{L^2} &= (\int_{|x|<A} + \int_{A<|x|<B} + \int_{|x|>B})|x|^{-2}|u-u_*|^2 
    \end{align*}
    Applying \cref{eq:near}, \cref{eq:uniform}, and \cref{eq:far} to each term respectively, we get
    \begin{align*}
        |||x|^{-1}|u-u_*|||^2_{L^2}&\lesssim A^2 + \tilde\epsilon \int_{A<|x|<B} |x|^{-2}dx + (1+R^3)^2\int_{|x|>B} |x|^{-4}dx\\
        &\lesssim A^2+\tilde \epsilon \ln(B/A) +R^6 B^{-2}
    \end{align*}
    Taking $A = \tilde \epsilon^{1/2}$ and $B = R^6\tilde\epsilon^{-1/2}$, we get
    \begin{align*}
        ||u_\theta-\mu||^2_{L^2} \lesssim \tilde \epsilon \ln(R^6/\tilde\epsilon) \lesssim \ln(R)^2\tilde\epsilon^{1/2},
    \end{align*}
    which completes the proof.
\end{proof}

Let $\Phi$ be the Lagrangian flow map associated to the velocity field $u$. Since $\omega_t$ is 3-fold symmetric, $0$ is a fixed point of $\Phi$. Thus we can view $\Phi$ as a map from $(\mathbb{R}^2\setminus\{0\}) \times \mathbb{R}$ to $\mathbb{R}^2\setminus\{0\}$, and consider the lift $\tilde \Phi = (\Phi_r, \Phi_\theta)$ of $\Phi$ to the universal cover $\mathbb{R} \times \mathbb{R}^+$ of $\mathbb{R}^2 \setminus\{0\}$. For simplicity of notation, when 
\[
    p=(r\cos\theta,r\sin\theta) \in \mathbb{R}^2\setminus\{0\} \text{ with } \theta \in[0,2\pi),
\]
we will take $\tilde \Phi(p,t)$ to mean $\tilde \Phi((\theta,r),t)$. It was shown in \cite{Drivas_2024} that there exists $C=C(u_*)>0$ such that the lift  satisfies
\begin{align*}
    ||\Phi_\theta - \theta - t\mu(\Phi_r)||_{L^2(\mathbb{R}^2\setminus\{0\})} &\leq C t \sqrt{||\frac1r u_r||_{L^2}+||u_\theta - \mu||_{L^2}} 
\end{align*}
Applying \cref{lem:velbounds}, we get
\begin{align*}
    ||\Phi_\theta - \theta - t\mu(\Phi_r)||_{L^2} &\lesssim t\sqrt{\ln(R) \tilde \epsilon^{1/4}} \\
    &=:  \epsilon' \cdot t
\end{align*}

Let $\epsilon :=\max(\tilde \epsilon^{1/2},\epsilon')$ and take $\epsilon << 1$. Note that
\begin{align*}
    ||\Phi_\theta - \theta - t\mu(\Phi_r)||_{L^2} &\lesssim \epsilon \cdot t \\
    ||u-u_*||_{L^\infty} &\lesssim \epsilon
\end{align*}

\subsection{The Twisting}
\begin{proof}[Proof of \cref{thm:pgrowth}]
Our goal is to show for any time $T >> 1$ the existence of points $p, q \in \partial \Omega_0$ satisfying
\begin{enumerate}
    \item $\Phi_\theta(q,T) < c_0T < c_1T < \Phi_\theta(p,T)$
    \item $\Phi_r(p,T), \Phi_r(q,T) > r_0 > 0$
\end{enumerate}
for some absolute constants $c_1 > c_0 > 0, r_0>0$ independent of $T$. The geometric Lemma 3.18 used in \cite{Drivas_2024} would then imply that the segment of $\partial \Omega_T$ connecting $\Phi(p,T)$ to $\Phi(q,T)$ has length at least $2r_0(c_1-c_0)T - 1$, giving linear in time perimeter growth. \\

To that end, at time $T >> 1$, $||\omega_T-\1_{B(0,1)}||_{L^1} < \epsilon$ implies the existence of  $O(1)$ measure sets $A_T, B_T \subset \Omega_T^c$ with $\pi_r(A_T) \subset [1,2]$ and $\pi_r(B_T) \subset [16,17]$. Letting $A_0, B_0$ be the preimage of these sets under $\Phi_T$, we have
\begin{align*}
    |\{p \in A_0: |\Phi_\theta - \theta -T\mu(\Phi_r)| > T/32\}|^{1/2} &\leq \frac{1}{T/32} ||\Phi_\theta - \theta - T\mu(\Phi_r)||_{L^2} \\
    &\lesssim \epsilon
\end{align*}
Recall that 
\[
    \mu(r) = \begin{cases}
        \frac12 \quad & \text{  if } r \leq 1 \\
        \frac1{2r} \quad &\text{ if } r \geq 1
    \end{cases}
\]
Since $\mu(\Phi_r) \in [1/4, 1/2]$ on $A_0$, it follows that, for $\epsilon$ sufficiently small, there exists $p_0 \in A_0$ with 
\begin{align*}
    &\Phi_{\theta} (p_0, T) > T/8 \\
    &1 < \Phi_r(p_0,T) < 20
\end{align*}
Similarly, there exists $q_0 \in B_0$ with 
\begin{align*}
    &\Phi_{\theta} (q_0,T) < T/16 \\
    &1 < \Phi_r(q_0,T)< 20
\end{align*}

Unfortunately, these points lie outside the patch $\Omega_0$ rather than on $\partial \Omega_0$. However, the intuitive expectation is that points outside the patch should not be able to move past the outer arms of the patch that many times, due to the small radial velocity of the field and the small angular velocity at large distances from the origin. This implies that under the flow, some points $p, q \in \partial \Omega_0$ should be both dragged by $p_0$ and held back by $q_0$ respectively. \\

To make the remarks above rigorous, let $z_0 \in \Omega_0^c$, and let $Z_t := \tilde \Phi(z_0,t) \in \mathbb{R} \times \mathbb{R}^+$. At time $t$, we can consider the curves
\begin{align*}
    \partial\tilde\Omega_t^n := \tilde\Phi(\partial \Omega_0,t) + (0, 2\pi n)
\end{align*}
lying in the universal cover, which connect to form a simple curve $\Gamma_t$ in $\mathbb{R} \times \mathbb{R}^+$. Note that the curve $\Gamma_t$ splits the cover into an upper set $\tilde\Omega^c_t$ and a lower set $\tilde\Omega_t$. Define $M^n_t \in \mathbb{R} \times \mathbb{R}^+$ as the first maximizer of $\pi_r(\partial \tilde \Omega^n_t)$ when traversed from left to right. Note that
\begin{align*}
    \Omega_t \setminus B(0,R) \neq \emptyset \implies \pi_r(M^n_t) > R >> 1.
\end{align*}

\begin{figure}[htbp]
  \centering
  \fbox{\includegraphics[width=0.6\linewidth]{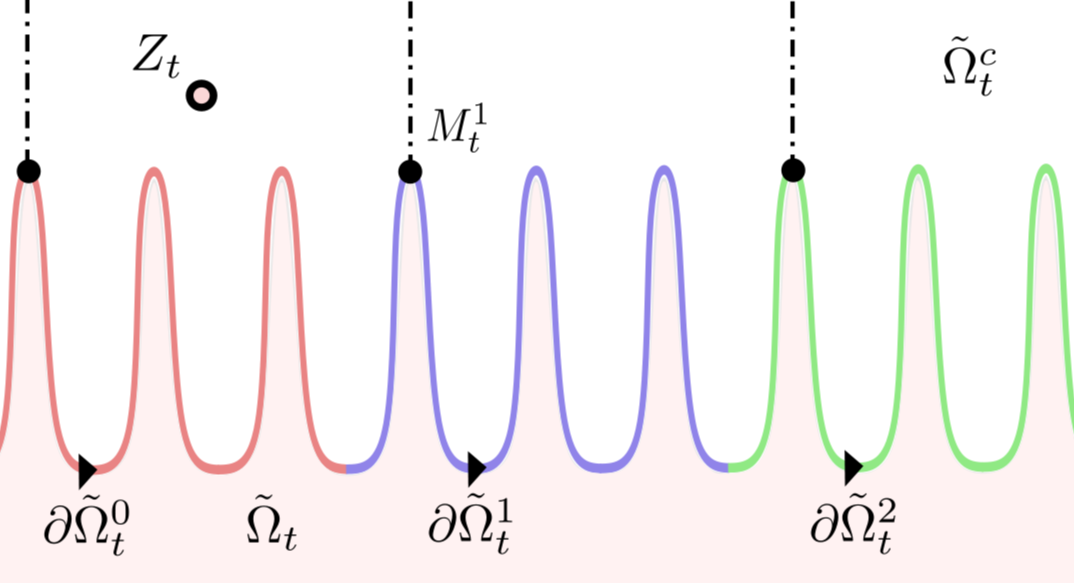}}
  \caption{Illustration of our setup over $\mathbb{R}\times\mathbb{R}^+$. Note that $N(t) = 1$.}
  \label{fig:decomp}
\end{figure}

For the sake of readability, these tildes will be dropped throughout the following discussion. \\

When we draw lines upwards from each $M^n_t$, we separate $\Omega_t^c$ into disjoint buckets. An example of this can be seen in \cref{fig:decomp}. Let $B_t^n$ denote the bucket with $M^n_t$ as its right vertex, and let $N(t)$ denote the label of the bucket containing $Z_t$. (To make $N(t)$ well-defined, a bucket will be considered to contain its right boundary.) \\

The following lemma captures the intuition of the slow rate of passing outer arms of the patch:
\begin{lemma}\label{lem:overtake}
    For any $z_0 \in \Omega_0^c$, the $N(T)$ defined above satisfies
    \begin{align*}
        |N(T)| &\lesssim 1+\max(1/R,\epsilon) T \\
        &\lesssim(\max(1/R,\epsilon)+o(1))T
    \end{align*}
\end{lemma}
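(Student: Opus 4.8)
The plan is to bound the number of buckets that $Z_t$ can traverse by controlling (a) the angular speed at which $Z_t$ itself moves in the universal cover, and (b) the angular speed at which the bucket interfaces (the vertical rays above $M^n_t$) move. If both of these are $O(\max(1/R,\epsilon))$ per unit time, then in time $T$ the relative displacement is $O(1 + \max(1/R,\epsilon)T)$, and since consecutive interfaces are separated by a horizontal distance of at least... well, not a fixed amount, but here is the key point: the curve $\Gamma_t$ is a graph-like simple curve whose successive lifts $\partial\tilde\Omega^n_t$ are $2\pi$-translates of one another, so the maximizers $M^n_t$ have $\pi_\theta(M^{n+1}_t) = \pi_\theta(M^n_t) + 2\pi$ (approximately — I will need to argue the first maximizer is essentially unique and tracks the translation). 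Hence crossing one bucket requires the angular coordinate to change by roughly $2\pi$ \emph{relative to the interfaces}, and $|N(T)|$ is controlled by the relative angular displacement divided by $2\pi$.

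First I would estimate the angular velocity of $Z_t$. Writing $\dot Z_t = (u_r(Z_t), u_\theta(Z_t))$ in the lifted coordinates, we have $|\dot\theta| = |u_\theta| \lesssim 1$ from Statement 2 of \cref{lem:velbounds} — but that only gives $|N(T)| \lesssim T$, which is too weak. The improvement comes from comparing to the reference flow: on the region $\{r > R\}$ the reference angular velocity is $\mu(r) = \frac{1}{2r} < \frac{1}{2R}$, and $|u_\theta - \mu| \leq |x|^{-1}|u - u_*| \lesssim \epsilon/R$ pointwise there by Statement 1; so wherever $Z_t$ lies at radius $> R$ its angular drift relative to the reference rotation rate at that radius is $\lesssim \max(1/R,\epsilon)$. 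The subtle point is that $Z_t$ need not stay at radius $>R$ — it lives in $\Omega^c_t$, which includes the region near the unit disk. So I would split the analysis: the maximizers $M^n_t$ sit at radius $>R$, and I should compare $\pi_\theta(Z_t)$ not to a fixed rotation but to $\pi_\theta(M^{N(t)}_t)$, i.e., track which bucket $Z_t$ is in by watching the \emph{signed angular gap} between $Z_t$ and the two interfaces bounding its bucket.

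The cleanest route, and the one I expect the author takes: define the interface ray above $M^n_t$ and note that $N(t)$ changes only when $Z_t$ crosses such a ray, i.e., when $\pi_\theta(Z_t) = \pi_\theta(M^n_t)$ for some $n$ \emph{and} $\pi_r(Z_t) < \pi_r(M^n_t)$ (which holds automatically since $\pi_r(M^n_t) > R \gg 1$ while $Z_t$, being in a bounded-in-$L^1$ perturbation regime, cannot be pushed out that far for all the relevant points — or one restricts attention to $z_0$ whose trajectory stays bounded, which is the generic/relevant case). At a crossing time, $\frac{d}{dt}(\pi_\theta(Z_t) - \pi_\theta(M^n_t)) = u_\theta(Z_t) - (\text{angular velocity of } M^n_t)$. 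The term $u_\theta(Z_t)$ is handled by the reference-flow comparison: since the crossing happens at $\pi_r(Z_t) < \pi_r(M^n_t)$ but $\pi_r(M^n_t) > R$, I want $Z_t$ itself to also be at large radius at the crossing — indeed at a crossing $Z_t$ lies on the interface ray which starts at radius $\pi_r(M^n_t) > R$, so $\pi_r(Z_t) > R$ there, giving $|u_\theta(Z_t)| \lesssim \mu(\pi_r(Z_t)) + \epsilon/R \lesssim 1/R + \epsilon$. The angular velocity of $M^n_t$ requires knowing $M^n_t$ moves with the flow (it is a material point, being a maximizer of $\pi_r$ along the transported boundary, so it moves by $u$ evaluated there) up to the subtlety that the \emph{identity} of the maximizer can jump; away from such jumps its angular velocity is $u_\theta$ at a radius $>R$, again $\lesssim 1/R + \epsilon$. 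Combining, at every crossing $|\frac{d}{dt}(\text{gap})| \lesssim \max(1/R,\epsilon)$, so between consecutive crossings in the \emph{same direction} a time of order $1/\max(1/R,\epsilon)$ must elapse for the gap to swing by $2\pi$; thus the number of crossings up to time $T$, hence $|N(T)|$, is $\lesssim 1 + \max(1/R,\epsilon)T$.

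The main obstacle is the regularity of the maximizers $M^n_t$: they are defined combinatorially as the \emph{first} maximizer of $\pi_r$ along a transported curve, and there is no reason for $t \mapsto M^n_t$ to be continuous — the maximizing point can switch discontinuously. I would handle this by arguing that such switches do not create extra bucket-crossings for $Z_t$: when the maximizer jumps, the interface ray jumps, but since all candidate maximizers lie at radius $>R$ and $Z_t$ at that moment must be "trapped" below them, a careful bookkeeping (or a slightly coarsened definition of $N(t)$ using, e.g., a Lipschitz-in-time selection among near-maximizers, or simply an upper/lower envelope argument counting only crossings of the \emph{lowest} interface radius) shows the count is unaffected up to an additive $O(1)$. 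A secondary technical point is ensuring $Z_t$ stays at radius $\gtrsim 1$ (so that "angular coordinate" is meaningful and $0$ is genuinely not in the way) — but $0$ is a fixed point of the flow and $z_0 \neq 0$, and the lift is well-defined on all of $\mathbb{R}\times\mathbb{R}^+$, so this is automatic. The $o(1)$ in the second displayed bound is just absorbing the additive constant into $T$ as $T \to \infty$.
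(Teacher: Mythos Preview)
Your approach has a genuine gap at the step ``at every crossing $|\frac{d}{dt}(\text{gap})| \lesssim \max(1/R, \epsilon)$, so between consecutive crossings in the same direction a time of order $1/\max(1/R,\epsilon)$ must elapse for the gap to swing by $2\pi$.'' Knowing the relative angular velocity is small \emph{at} crossing instants does not bound the time between crossings: between crossings $Z_t$ may descend to radius $O(1)$, where $u_\theta(Z_t) = O(1)$ and the gap can swing by $2\pi$ in time $O(1)$. Your observation that at a crossing $Z_t$ sits on the interface ray (hence at $r>R$) is correct, but it says nothing about the trajectory in between. What actually blocks rapid crossings is a mechanism you did not invoke: since $|u_r| \lesssim \epsilon$ uniformly, any excursion from $r>R$ down to $r=O(1)$ and back costs time $\gtrsim R/\epsilon$, while if $Z_t$ remains at $r \gtrsim R$ throughout then $|u_\theta| \lesssim 1/R + \epsilon$ and the $2\pi$ swing costs time $\gtrsim \min(R, 1/\epsilon)$. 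Either way the desired bound follows, but not for the reason you gave.

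The paper sidesteps the crossing count entirely. It fixes a step $\Delta \sim \min(R, \epsilon^{-1})$ and shows that the \emph{flow image} of the whole bucket $B^n_t$ at time $t+\Delta$ lies in $\bigcup_{|k-n|\le 2} B^k_{t+\Delta}$, by tracking the three boundary pieces: the bottom segment $C^n_t$ flows along $\Gamma$ into at most two adjacent periods of $\Gamma_{t+\Delta}$, and the side rays $L^n_t, R^n_t$ (which live entirely at radius $>R$) drift angularly by at most $1$ over time $\Delta$, so they cannot reach the rays $R^{n-3}_{t+\Delta}$ or $L^{n+3}_{t+\Delta}$. This yields $|N(t+\Delta) - N(t)| < 3$ directly, and summing over $\lceil T/\Delta \rceil$ steps gives the lemma. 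This discrete comparison also dissolves your ``main obstacle'': the discontinuity of $t\mapsto M^n_t$ is irrelevant because only the bucket decompositions at the two endpoints $t$ and $t+\Delta$ are compared, never a continuous selection.
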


Before proving \cref{lem:overtake}, let us first use it to complete the proof of \cref{thm:pgrowth}, i.e., let us show the existence of points $p, q \in \partial \Omega_0$ satisfying
\begin{enumerate}
    \item $\Phi_\theta(q) < c_0t < c_1t < \Phi_\theta(p)$
    \item $\Phi_r(p), \Phi_r(q) > r_0 > 0$
\end{enumerate}
for some absolute constants $c_1 > c_0 > 0$ and $r_0>0$, independent of $t$. For notational convenience, we will use $\nu$ to denote $\max(1/R,\epsilon)$. \\

If we apply the setup of our lemma to the points $p_0 \in A_0$, $q_0 \in B_0$, then $\pi_r(P_T) < R < \pi_r(M^n_T)$ implies that $P_T$ must have a point $\tilde P \in \Gamma_T$, either in period $N_{p_0}(T)-1$ or period $N_{p_0}(T)$, lying directly to its right and at the same height. Similarly, $Q_T$ must have a point $\tilde Q\in \Gamma_T$, either in period $N_{q_0}(T)-1$ or period $N_{q_0}(T)$, lying directly to its left and at the same height. If we let $P$ and $Q$ be the corresponding points on $\partial \Omega_T^0$, then
\begin{align*}
    \pi_\theta (Q) &\leq \pi_\theta(\tilde Q) + 2\pi(|N_{q_0}(T)|+1) \\
    &\leq \pi_\theta(Q_T) + C(\nu+o(1))T \\
    &\leq (1/16+C(\nu+o(1)))T \\ \\
    \pi_\theta(P) &\geq \pi_\theta(\tilde P) - 2\pi(|N_{p_0}(T)+1) \\
    &\geq \pi_\theta(P_T) - C(\nu+o(1))T \\
    &\geq (1/8-C(\nu+o(1)))T \\ \\
    \pi_r(P) &, \pi_r(Q) \geq 1
\end{align*}
Letting $p, q$ be the corresponding points on $\partial \Omega_0$ and noting $\nu << 1 << T$, it's clear that the desired properties are then satisfied, completing the proof of linear in time perimeter growth.
\end{proof}

\begin{proof}[Proof of \cref{lem:overtake}]
    Our goal is to show that over a time scale on the order of $\min(R, \epsilon^{-1})$, the contents of bucket $B^n_t$ stay contained within the buckets corresponding to $n-2 \leq k \leq n+2$. By breaking the interval $[0,T]$ into time steps of this length, we get \cref{lem:overtake}. \\ 
    
    Recall that our velocity field satisfies
    \begin{align*}
        |u_r(x)| &\lesssim \epsilon \\
        |u_\theta(x)| &\lesssim \frac{1}{1+|x|} + \epsilon
    \end{align*}
    It follows that there exists $\Delta \sim \min(R, \epsilon^{-1})$ such that
    \begin{align*}
        \pi_r(\Phi(x,t)) \geq R \implies |\pi_\theta(\Phi(x,t)) - \pi_\theta(\Phi(x,t+\Delta))| \leq 1
    \end{align*}
    Given a bucket $B^n_t$, the following objects are of interest:
    \begin{align*}
        M^n_t&, M^{n-1}_t, \\
        L^n_t &:= \text{the left boundary of }B^n_t \\
        R^n_t &:= \text{the right boundary of }B^n_t \\
        C^n_t &:= \text{the bottom boundary of }B^n_t, \\&\quad\text{ i.e. the segment of }\Gamma_t \text{ connecting } M^{n-1}_t \text{ to }M^n_t
    \end{align*}
    Since we're interested in the time evolution of the bucket, let us define $B^n_t(s)$ to represent the image at time $s$ of the bucket under our flow, and similarly for the other objects. Now at time $s = t + \Delta$, we know that
    \begin{align*}
        C^n_t(s) \subset\partial \Omega^{n-1}_s \cup \partial \Omega^n_s \subset B^{n-1}_s \cup B^n_s \cup B^{n+1}_s
    \end{align*}
    Considering $R^n_t(s)$, we know that it originates from $M^n_t(s)$ and extends to radial infinity. Since it can't pass through $\Gamma_s$, the curve $R^n_t(s)$ exceeds the height of $\Gamma_s$ somewhere within
    \begin{align*}
        \{x : \pi_\theta(M^{n-2}_s) \leq \pi_\theta(x) \leq \pi_\theta(M^{n+1}_s)\}.
    \end{align*}
    Since $\pi_r(R^n_t) \subset(R, \infty)$, our choice of $\Delta$ guarantees
    \begin{align*}
        R^n_t(s) \subset\{x : \pi_\theta(M^{n-2}_s)-2 \leq \pi_\theta(x) \leq \pi_\theta(M^{n+1}_s)+2\}
    \end{align*}
    It follows that $R^n_t(s)$ does not intersect either $L^{n+3}_s$ or $R^{n-3}_s$, and the same argument holds for $L^n_t(s)$. Put all together, we get that $B^n_t(s)$ must be contained within
    \begin{align*}
        B^{n-2}_s \cup B^{n-1}_s \cup B^n_s \cup B^{n+1}_s \cup B^{n+2}_s
    \end{align*}
    Now considering $n=N(t)$, this shows that
    \begin{align*}
        |N(t+\Delta)-N(t)| < 3
    \end{align*}
    Splitting $[0,T]$ into intervals of size $\Delta$, we get that
    \begin{align*}
        |N(T)| &\leq |N(0)| + 3\lceil T/\Delta \rceil \\
        &\lesssim 1 + \max(1/R, \epsilon)T
    \end{align*}
\end{proof}

\section{Appendix}
\begin{proposition*}[Restatement of \cref{prop:pseudo-bound}]
    There exists $C_n>0$ such that for all $\omega : \mathbb{R}^n \to [0,1]$ with $\omega \in L^1(\mathbb{R}^n)$,
    \begin{align*}
        \int\int_{E^*\times E^*}& \ln(|x-y|^{-1})dxdy - \int\int_{\mathbb{R}^n\times\mathbb{R}^n} \omega(x)\ln(|x-y|^{-1})\omega(y)dxdy \\
        &\geq C_n \inf_{a\in\mathbb{R}^n}||\omega-\1_{E^*+a}||_{L^1}^2
    \end{align*}
    where $E^*$ is the ball centered at the origin with $|E^*| = \int_{\mathbb{R}^n} \omega(x)dx$.
\end{proposition*}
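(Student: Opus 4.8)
The plan is to deduce \cref{prop:pseudo-bound} from the sharp quantitative Riesz rearrangement inequality of Frank--Lieb \cite{Frank_2021}, following the remark of Yan--Yao \cite{Yan_2022}. Write $\mathcal E(\omega):=\iint_{\mathbb R^n\times\mathbb R^n}\omega(x)\ln(|x-y|^{-1})\omega(y)\,dx\,dy$ and, for $0<\lambda<n$, $\mathcal E_\lambda(\omega):=\iint_{\mathbb R^n\times\mathbb R^n}\omega(x)|x-y|^{-\lambda}\omega(y)\,dx\,dy$; we may assume $\mathcal E(\omega)>-\infty$, as otherwise the left-hand side of the proposition is $+\infty$ and there is nothing to prove. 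That $\1_{E^*}$ is the unique (up to translation) maximizer of $\mathcal E$ over $\{\omega:0\le\omega\le1,\ \int\omega=|E^*|\}$ is the classical combination of the Riesz rearrangement inequality with the bathtub principle, so the real content is the quantitative lower bound. A first reduction is normalization: under $x\mapsto\rho x$ the indicator of the ball of radius $\rho$ becomes $\1_{B_1}$, while $\mathcal E$ and $\mathcal E_\lambda$ transform by a positive power of $\rho$ together with, in the case of $\mathcal E$, an additive term depending only on the (fixed) mass $|E^*|$. Since every competitor shares that mass, this additive term---and indeed any additive shift of the kernel---cancels in the deficit, and the surviving power of $\rho$ matches the scaling of $\inf_{a}\|\omega-\1_{E^*+a}\|_{L^1}^2$; hence it suffices to treat $|E^*|=|B_1|$, with a constant depending only on $n$.

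The core step is to fit the logarithmic kernel into the Frank--Lieb framework. Their inequality provides, for each $0<\lambda<n$, a constant $c(n,\lambda)>0$ such that
\[
\iint_{B_1\times B_1}|x-y|^{-\lambda}\,dx\,dy-\mathcal E_\lambda(\omega)\ \ge\ c(n,\lambda)\,\inf_{a\in\mathbb R^n}\|\omega-\1_{B_1+a}\|_{L^1}^2
\]
for every $\omega:\mathbb R^n\to[0,1]$ with $\int\omega=|B_1|$. There are two routes to the logarithmic kernel. \emph{(a)} A limiting argument: since $\partial_\lambda|z|^{-\lambda}\big|_{\lambda=0}=\ln(|z|^{-1})$ and the mass is fixed, the logarithmic deficit equals $\lim_{\lambda\to0^+}\lambda^{-1}\big(\iint_{B_1\times B_1}|x-y|^{-\lambda}\,dx\,dy-\mathcal E_\lambda(\omega)\big)$ by dominated convergence (this is where $\mathcal E(\omega)>-\infty$ is used), so the proposition holds with $C_n=\limsup_{\lambda\to0^+}c(n,\lambda)/\lambda$, provided one checks that the Frank--Lieb constant is not $o(\lambda)$ as $\lambda\to0^+$. \emph{(b)} A direct route: the argument of \cite{Frank_2021} uses only general structural features of the kernel---radial symmetry, strict monotonicity in $|x-y|$, and sufficient local integrability---all of which $\ln(|x-y|^{-1})$ enjoys, so if the logarithmic case is not recorded there explicitly, their proof should apply with only notational changes after one truncates the kernel at large $|x-y|$ (where it is negative and tends to $-\infty$) to keep all quantities finite and then removes the truncation using the exact cancellation in the deficit.

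Either route yields, at $|E^*|=|B_1|$, the bound $\iint_{B_1\times B_1}\ln(|x-y|^{-1})\,dx\,dy-\mathcal E(\omega)\ge C_n\inf_{a}\|\omega-\1_{B_1+a}\|_{L^1}^2$, which the normalization step promotes to arbitrary radius, proving the proposition. The main obstacle I anticipate is precisely this passage to the logarithmic kernel: in route (a), the quantitative behavior of $c(n,\lambda)$ as $\lambda\to0^+$ is not transparent from the statement of the rearrangement inequality; in route (b), one must check that the sign change and slow decay of $\ln(|x-y|^{-1})$ at infinity do not disrupt the argument of \cite{Frank_2021} and that the truncation-and-limit bookkeeping is legitimate. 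The remaining ingredients---the rearrangement/bathtub identification of the maximizer, the scaling reductions, and the extraction of a squared $L^1$ deficit---are routine.
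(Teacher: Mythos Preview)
Your two proposed routes both carry the gaps you flag, and neither is how the paper actually closes the argument. In route~(a) you need $\liminf_{\lambda\to 0^+}c(n,\lambda)/\lambda>0$ (not $\limsup$, incidentally), and extracting this from the Frank--Lieb proof would require tracking their constants through the whole argument; route~(b) is not a proof but a hope. The paper avoids both issues by invoking a \emph{different} Frank--Lieb statement: not the Riesz-kernel version $|x-y|^{-\lambda}$, but the version with kernel $\1_{B_R}(x-y)$, which says that for any ball $B_R$ with $\tfrac14\le \tfrac{|B_R|^{1/n}}{2\|\omega\|_1^{1/n}}\le\tfrac34$ there is a constant $c_{n,1/4}>0$ (depending only on $n$) with
\[
\iint_{E^*\times E^*}\1_{B_R}(x-y)\,dx\,dy-\iint\omega(x)\1_{B_R}(x-y)\omega(y)\,dx\,dy\ \ge\ c_{n,1/4}\inf_a\|\omega-\1_{E^*+a}\|_{L^1}^2.
\]

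The key idea you are missing is then a layer-cake representation of the logarithm. After reducing to compactly supported $\omega$ (say $\mathrm{supp}\,\omega\subset B(0,L)$), one writes, for $x,y\in B(0,L)$,
\[
\ln(|x-y|^{-1})=\int_{|x-y|}^{2L}\frac{dR}{R}-\ln(2L)=\int_0^{2L}\1_{B_R}(x-y)\,\frac{dR}{R}-\ln(2L).
\]
The additive constant $-\ln(2L)$ cancels in the deficit because $\int\omega=|E^*|$, and Fubini turns the logarithmic deficit into $\int_0^{2L}\frac{dR}{R}\big(\text{indicator deficit at radius }R\big)$. For $R$ outside the ``good'' window the integrand is nonnegative by Riesz rearrangement plus the bathtub principle; for $R$ in the good window one plugs in the displayed Frank--Lieb bound. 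Since $\int_I dR/R$ over that window is a positive constant depending only on $n$, one obtains $C_n=c_{n,1/4}\int_I dR/R$ directly, with no limiting procedure and no need to track how any constant degenerates. This is precisely the step that replaces your routes~(a) and~(b).
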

\begin{proof}
    We recall the following theorem from \cite{Frank_2021}:
    \begin{theorem}\label{thm:diskbound}
    Let $0 < \delta < 1/2$. Then there is a constant $c_{n,\delta} > 0$ such that for all balls $B \subset \mathbb{R}^n$, centered at the origin, and all $\omega \in L^1(\mathbb{R}^n)$ with $0 \leq \omega \leq 1$ and 
    \begin{align*}
        \delta \leq \frac{|B|^{1/n}}{2||\omega||_{L^1}^{1/n}} \leq 1-\delta,
    \end{align*}
    one has
    \begin{align*}
        \int\int_{E^*\times E^*}& \1_B(x-y)dxdy - \int\int_{\mathbb{R}^n\times\mathbb{R}^n} \omega(x)\1_B(x-y)\omega(y)dxdy \\
        &\geq c_{n,\delta} \inf_{a\in\mathbb{R}^n} ||\omega-\1_{E^*+a}||^2_{L^1}
    \end{align*}
    where $E^*$ is the ball, centered at the origin, of measure $|E^*| = \int_{\mathbb{R}^n} \omega(x)dx$.
    \end{theorem}

    In proving \cref{prop:pseudo-bound}, we can assume
    \begin{align}
        \omega(x)\ln(|x-y|^{-1})\omega(y) \in L^1(\mathbb{R}^n\times \mathbb{R}^n) \label{eq:converge}
    \end{align}
    as otherwise the left-hand side is $\infty$. If we knew the claim for $\omega$ having compact support, then we can conclude for $\omega$ satisfying \cref{eq:converge} by applying the claim to $\omega_n = \omega\1_{B(0,n)}$, taking $n \to \infty$, and noting that all the relevant terms converge. \\

    To that end, if we know $\text{supp }\omega \subset B(0,L)$, then we know that $E^* \subset B(0,L)$. It follows that we can assume $x,y \in B(0,L)$ and write
    \begin{align*}
        \ln(|x-y|^{-1}) = \int_{|x-y|}^{2L} \frac{dR}{R} - \ln(2L)
    \end{align*}
    Using Fubini's Theorem (note that the $\ln(2L)$ terms cancel), we can rewrite the left-hand side as
    \begin{align*}
        \int_{0}^{2L} \frac{dR}{R}\left( \int\int_{E^* \times E^*} \1_{B_R}(x-y)dxdy - \int\int_{\mathbb{R}^n \times \mathbb{R}^n} \omega(x)\1_{B_R}(x-y) \omega(y) dxdy\right)
    \end{align*}
    Let 
    \begin{align*}
        I := \{R > 0: \frac14 \leq \frac{|B_R|^{1/n}}{2||\omega||_{L^1}^{1/n}} \leq \frac34\}
    \end{align*}
    For $R \notin I$, we can lower bound the integrand by $0$ using the Riesz Rearrangement Inequality and the Bathtub Principle. For $R \in I$, we can apply \cref{thm:diskbound} to get an overall lower bound of
    \begin{align*}
        \int\int_{E^*\times E^*}& \ln(|x-y|^{-1})dxdy - \int\int_{\mathbb{R}^2\times\mathbb{R}^2} \omega(x)\ln(|x-y|^{-1})\omega(y)dxdy \\
        &\geq c_{n,\frac14} \inf_{a\in \mathbb{R}^n} ||\omega-\1_{E^*+a}||_{L^1}^2 \int_I \frac{dR}{R},
    \end{align*}
    and it is not hard to see that
    \begin{align*}
        \int_I \frac{dR}{R}
    \end{align*}
    is a constant depending only on the dimension $n$.
\end{proof}

\begin{proposition*}[Restatement of \cref{prop:mbound}]
    Suppose $\omega:\mathbb{R}^2 \to [0,1]$ satisfies $\omega = \omega \circ R_{2\pi/m}$ for some $m \geq 2$, and $\omega \in L^1(\mathbb{R}^2)$. Then
    \begin{align*}
        \inf_{a\in\mathbb{R}^2}||\omega-\1_{E^*+a}||_{L^1} \geq \frac13  ||\omega-\1_{E^*}||_{L^1}
    \end{align*}
    where $E^*$ is the ball centered at the origin with $|E^*| = \int_{\mathbb{R}^2} \omega(x)dx$.
\end{proposition*}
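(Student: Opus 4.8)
The plan is to derive the estimate from the triangle inequality together with one structural observation: because $E^*$ is a ball centered at the origin, the rotational symmetry of $\omega$ is inherited by the family of translated balls it is being compared against. Concretely, I would first show that $a\mapsto \|\omega-\1_{E^*+a}\|_{L^1}$ is invariant under $a\mapsto R_{2\pi/m}a$. Changing variables $x\mapsto R_{2\pi/m}x$ in the integral and using $\omega\circ R_{2\pi/m}=\omega$ reduces this to the identity $R_{2\pi/m}(E^*+a)=R_{2\pi/m}E^*+R_{2\pi/m}a=E^*+R_{2\pi/m}a$, valid since $E^*$ is a centered ball. Iterating, $\|\omega-\1_{E^*+a}\|_{L^1}=\|\omega-\1_{E^*+R_{2\pi k/m}a}\|_{L^1}$ for every $k\in\mathbb{Z}$.

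Next I would fix an arbitrary $a\in\mathbb{R}^2$, write $a_k:=R_{2\pi k/m}a$, and select an index $k^*$ with $|a_{k^*}-a|\ge |a|$. Such an index always exists: for $m$ even take $a_{k^*}=-a$, at distance $2|a|$; for $m$ odd take $k^*=(m-1)/2$, so the angle between $a_{k^*}$ and $a$ equals $\pi-\pi/m\ge 2\pi/3$ and the chord length is $2|a|\cos(\pi/(2m))\ge\sqrt{3}\,|a|$. With $k^*$ in hand, two estimates combine. By the triangle inequality and the invariance above, $\|\1_{E^*+a}-\1_{E^*+a_{k^*}}\|_{L^1}\le \|\omega-\1_{E^*+a}\|_{L^1}+\|\omega-\1_{E^*+a_{k^*}}\|_{L^1}=2\|\omega-\1_{E^*+a}\|_{L^1}$. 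On the other hand, since $E^*$ is a centered ball, $\|\1_{E^*+b}-\1_{E^*+c}\|_{L^1}$ depends only on $|b-c|$ and is nondecreasing in it — it equals $2(|E^*|-|E^*\cap(E^*+(c-b))|)$, and the intersection volume of two equal balls is radially nonincreasing in the offset — so $|a_{k^*}-a|\ge|a|$ yields $\|\1_{E^*+a}-\1_{E^*+a_{k^*}}\|_{L^1}\ge \|\1_{E^*}-\1_{E^*+a}\|_{L^1}$. Together these give $\|\1_{E^*}-\1_{E^*+a}\|_{L^1}\le 2\|\omega-\1_{E^*+a}\|_{L^1}$.

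A final triangle inequality then closes the argument: $\|\omega-\1_{E^*}\|_{L^1}\le \|\omega-\1_{E^*+a}\|_{L^1}+\|\1_{E^*+a}-\1_{E^*}\|_{L^1}\le 3\|\omega-\1_{E^*+a}\|_{L^1}$. Since $a$ was arbitrary, taking the infimum over $a$ on the right-hand side gives $\|\omega-\1_{E^*}\|_{L^1}\le 3\inf_{a}\|\omega-\1_{E^*+a}\|_{L^1}$, which is the claimed inequality.

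The only genuinely nontrivial ingredient — and hence the main obstacle, though it is mild — is the elementary geometric fact used in the selection step: among the vertices of a regular $m$-gon of circumradius $r$ (or the antipodal pair when $m=2$) there are always two at mutual distance at least $r$. This is precisely what makes $m$-fold symmetry force $\omega$ to be close to the \emph{centered} ball $E^*$ rather than to a nonzero translate of it, and it is the step where the hypothesis $m\ge 2$ enters. Everything else (the change of variables, the monotonicity of $|b-c|\mapsto\|\1_{E^*+b}-\1_{E^*+c}\|_{L^1}$, and the triangle inequalities) is routine.
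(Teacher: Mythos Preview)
Your proof is correct and takes a genuinely different route from the paper's. The paper argues by first choosing a minimizer $a$ of $\|\omega-\1_{E^*+a}\|_{L^1}$ (appealing to $\omega\in L^1$ for its existence), then estimating $\int_{\cup_i B(a_i,r)}\omega$ for two or three rotated copies $a_i=R_{2\pi k_i/m}a$ via inclusion--exclusion; the geometric input is that the intersection $\bigcap_i B(a_i,r)$ is contained in $B(0,r)$, which yields $\epsilon\ge\delta/2$ for $m$ even and $\epsilon\ge\delta/3$ for $m$ odd. Your argument instead stays entirely at the level of the $L^1$ triangle inequality: the rotational invariance of $a\mapsto\|\omega-\1_{E^*+a}\|_{L^1}$ plus the monotonicity of $d\mapsto\|\1_{E^*}-\1_{E^*+de}\|_{L^1}$ and the elementary chord estimate $|a_{k^*}-a|\ge|a|$ suffice. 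What your approach buys is that it avoids the attainment argument entirely (you work with arbitrary $a$ and take the infimum at the end) and treats all $m\ge2$ uniformly with a single short computation; the paper's approach, on the other hand, extracts the sharper constant $1/2$ in the even case and makes the role of the mass constraint $\int\omega=|E^*|$ more transparent through the identity $\int_{B(0,r)}\omega=\pi r^2-\delta/2$.
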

\begin{proof}
    Let
    \begin{align*}
        \int_{\mathbb{R}^2} \omega(x)dx = \pi r^2 \\
        \implies E^* =B(0,r)
    \end{align*}
    and let
    \begin{align*}
        \delta &:= ||\omega-\1_{E^*}||_{L^1} \\
        \epsilon &:= \inf_{a\in\mathbb{R}^2}||\omega-\1_{E^*+a}||_{L^1}
    \end{align*}
    Since $\omega \in L^1$, it is not hard to see that there exists $a \in \mathbb{R}^2$ such that
    \begin{align*}
        \epsilon = ||\omega-\1_{E^*+a}||_{L^1}
    \end{align*}
    Suppose that $m$ is even. Then we know that
    \begin{align*}
        ||\omega-\1_{E^*-a}||_{L^1} = ||\omega-\1_{E^*+a}||_{L^1} = \epsilon
    \end{align*}
    This gives
    \begin{align*}
        \pi r^2 &\geq \int_{B(a,r)\cup B(-a,r)} \omega(x)dx \\
        &= 2(\pi r^2 -\epsilon/2) - \int_{B(a,r)\cap B(-a,r)} \omega(x)dx \\
    \end{align*}
    Noting that $B(a,r)\cap B(-a,r) \subset B(0,r)$, this gives
    \begin{align*}
        \pi r^2 -\epsilon &\leq \int_{B(a,r)\cap B(-a,r)} \omega(x)dx \leq \int_{B(0,r)} \omega(x)dx =\pi r^2 - \delta/2 \\ \\
        \implies& \epsilon \geq \delta/2
    \end{align*}
    showing the claim for $m$ even. For $m = 2n+1$ odd, we can find
    \begin{align*}
        a_1 = a, a_2 = R_{2\pi j/m}(a), a_3=R_{2\pi k/m}(a)
    \end{align*}
    such that
    \begin{align*}
        B(a_1,r)\cap B(a_2,r) \cap B(a_3,r) \subset B(0,r).
    \end{align*}
    Indeed, we can just take $j = n$ and $k=n+1$ and see that
    \begin{align*}
        x &\in B(a_2,r) \cap B(a_3,r) \\
        &\implies x \in B(\frac{a_2+a_3}{2},r) = B(-\cos(\pi/m)a,r) \\
        x&\in B(-\cos(\pi/m)a,r) \cap B(a,r) \\
        &\implies |x| = |\frac{ (x+\cos(\pi/m)a)+\cos(\pi/m)(x-a)}{1+\cos(\pi/m)}| \leq \frac{r+\cos(\pi/m)r }{1+\cos(\pi/m)} = r
    \end{align*}
    Let 
    \begin{align*}
        P_{ij} := B(a_i,r)\cap B(a_j,r),
    \end{align*}
    and note that our computations above give
    \begin{align*}
        \int_{P_{ij}} \omega(x)dx \geq \pi r^2 -\epsilon
    \end{align*}
    This gives
    \begin{align*}
        \pi r^2 &\geq \int_{P_{12}\cup P_{23} \cup P_{13}} \omega(x)dx \\
        &\geq 3(\pi r^2-\epsilon)-2\int_{B(a_1,r)\cap B(a_2,r) \cap B(a_3,r)}\omega(x)dx \\
        &\geq 3(\pi r^2-\epsilon) -2 \int_{B(0,r)}\omega(x)dx \\
        &= 3(\pi r^2 -\epsilon) -2(\pi r^2 -\delta/2) \\ \\
        \implies& \epsilon \geq \delta/3
    \end{align*}
    In either case, we see that
    \begin{align*}
        \inf_{a\in\mathbb{R}^2}||\omega-\1_{E^*+a}||_{L^1} \geq \frac13  ||\omega-\1_{E^*}||_{L^1}
    \end{align*}
\end{proof}
\begin{proposition*}[Restatement of \cref{prop:ibound}]
    Let $I,r>0$. There exists $C= C(I,r) >0$ such that if $\omega:\mathbb{R}^2 \to [0,1]$ satisfies 
    \[
    \int |x|^2 \omega(x)dx \leq I, \int x\omega(x)dx =0, \text{ and } \int \omega(x)dx = \pi r^2,
    \]
    then
    \begin{align*}
        \inf_{a\in\mathbb{R}^2}||\omega-\1_{B(a,r)}||_{L^1} \geq C  ||\omega-\1_{B(0,r)}||_{L^1}^2
    \end{align*}
    Further, the exponent is sharp in this inequality.
\end{proposition*}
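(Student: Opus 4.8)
The plan is to control the optimal translate. Write $\epsilon := \inf_{a\in\mathbb{R}^2}\|\omega - \1_{B(a,r)}\|_{L^1}$; as in the proof of \cref{prop:mbound} this infimum is attained, say at $a^*$, and we may assume $\epsilon$ lies below any fixed threshold $c(I,r)$, since otherwise the desired inequality holds trivially with $C = c(I,r)/(2\pi r^2)^2$ (using $\|\omega - \1_{B(0,r)}\|_{L^1} \le 2\pi r^2$). The goal is then to prove $|a^*| \lesssim_{I} \epsilon^{1/2}$; granting this, the triangle inequality together with the elementary bound $\|\1_{B(a,r)} - \1_{B(0,r)}\|_{L^1} \le 2\pi r|a|$ gives $\|\omega - \1_{B(0,r)}\|_{L^1} \le \epsilon + 2\pi r|a^*| \lesssim_{I,r} \epsilon^{1/2}$, and squaring is the claim.

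To bound $|a^*|$ I would use the two as-yet-unused conserved quantities in turn. From $\|\omega - \1_{B(a^*,r)}\|_{L^1} = \int_{B(a^*,r)}(1 - \omega) + \int_{B(a^*,r)^c}\omega = \epsilon$, at least $\pi r^2 - \epsilon$ of the mass of $\omega$ sits inside $B(a^*,r)$, so $I \ge \int |x|^2\omega \ge (|a^*| - r)_+^2(\pi r^2 - \epsilon)$, which for $\epsilon < \pi r^2/2$ already yields a crude bound $|a^*| \le R_0(I,r)$. Then the center-of-vorticity hypothesis gives $\pi r^2 a^* = \int x\,\1_{B(a^*,r)} - \int x\,\omega = \int x\bigl(\1_{B(a^*,r)} - \omega\bigr)$; splitting this integral at a radius $M > R_0 + r$ (so that $\1_{B(a^*,r)}$ vanishes beyond it),
\[
\pi r^2 |a^*| \;\le\; M\!\int_{|x|\le M}\!\!|\1_{B(a^*,r)} - \omega| \;+\; \frac{1}{M}\!\int_{|x|>M}\!\!|x|^2\,\omega \;\le\; M\epsilon + \frac{I}{M},
\]
and optimizing in $M$ — legitimately, since $M = (I/\epsilon)^{1/2} > R_0 + r$ once $\epsilon$ is small — produces $|a^*| \le 2(I\epsilon)^{1/2}/(\pi r^2)$, as wanted. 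The only real care needed here is bookkeeping: the angular-momentum bound on $|a^*|$ must be in hand before it is used to discard the tail of the first-moment integral, and one should check that all the thresholds imposed on $\epsilon$ depend only on $I$ and $r$.

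For the sharpness of the exponent, the idea is that a tiny sliver of mass moved very far out generates a large first moment at negligible $L^1$ cost, so one can force the optimal translate $a^*$ to be of exact order $\epsilon^{1/2}$ rather than $\epsilon$. I would take, for $|a_n| \to 0$, the competitor $\omega_n := \1_{B(a_n,r)} - \1_{S_n} + \1_{T_n}$, where $S_n \subset B(a_n,r)\cap B(0,r)$ has measure $\sigma_n := |a_n|^2$ and is symmetric about the line through $0$ and $a_n$, and $T_n$ is a small disk of the same measure centered on that line at distance $\sim \pi r^2/|a_n|$ on the far side of the origin, tuned so that $\int x\,\omega_n = 0$. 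One checks $\int\omega_n = \pi r^2$ and $\int|x|^2\omega_n \le \int_{B(a_n,r)}|x|^2 + (\text{dist})^2\sigma_n \lesssim r^4$ uniformly in $n$, so the hypotheses hold with a fixed $I = I(r)$; moreover $\inf_a\|\omega_n - \1_{B(a,r)}\|_{L^1} = \|\omega_n - \1_{B(a_n,r)}\|_{L^1} = 2\sigma_n \asymp |a_n|^2$ — the infimum is attained at $a_n$ because sliding the test ball toward $T_n$ bleeds mass at rate $\sim r\cdot(\text{displacement})$, overwhelming $\sigma_n$ long before it reaches $T_n$ — whereas $\|\omega_n - \1_{B(0,r)}\|_{L^1} = 2|B(a_n,r)\setminus B(0,r)| + 2\sigma_n \asymp r|a_n|$. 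Hence $\|\omega_n - \1_{B(0,r)}\|_{L^1}^{p}\big/\inf_a\|\omega_n - \1_{B(a,r)}\|_{L^1} \asymp |a_n|^{p-2} \to \infty$ for every $p < 2$, so no exponent below $2$ can work.

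I expect the sharpness construction to be the step demanding the most thought — specifically, choosing $S_n$ and $T_n$ so that the center of vorticity cancels exactly while the angular momentum stays uniformly bounded, and verifying that the optimal translate of $\omega_n$ really is $a_n$ rather than some ball reaching out toward $T_n$. Everything in the main estimate is elementary; the subtlety there is purely one of ordering the steps and tracking the dependence of constants on $I$ and $r$.
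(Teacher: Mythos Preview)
Your argument is correct and follows essentially the same approach as the paper: bound the optimal center $|a^*|$ by $C(I,r)\epsilon^{1/2}$ using the zero first moment together with the second-moment bound, then conclude via the Lipschitz continuity of $a\mapsto \1_{B(a,r)}$ in $L^1$; your sharpness example is the same construction as the paper's. The one minor technical difference is that the paper reaches $|a^*|\lesssim\epsilon^{1/2}$ in a single step---splitting $\int x(\1_{B(a^*,r)}-\omega)$ over $B(a^*,r)$ and its complement and applying Cauchy--Schwarz directly to get $\pi r^2|a^*|\le (|a^*|+r)\tfrac{\epsilon}{2}+I^{1/2}(\epsilon/2)^{1/2}$---which avoids your preliminary crude bound on $|a^*|$ and the subsequent radius-splitting optimization.
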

\begin{proof}
    Again, let
    \begin{align*}
        \delta &:= ||\omega-\1_{B(0,r)}||_{L^1} \\
        \epsilon &:= \inf_{a\in\mathbb{R}^2}||\omega-\1_{B(a,r)}||_{L^1}
    \end{align*}
    Since $\omega \in L^1$, we find an $a \in \mathbb{R}^2$ such that
    \begin{align*}
        \epsilon = ||\omega-\1_{B(a,r)}||_{L^1}
    \end{align*}
    We have
    \begin{align*}
        \int x\omega(x)dx = 0 \implies \pi r^2 a = \int_{B(a,r)} x (1-\omega(x))dx - \int_{B(a,r)^c}x\omega(x)dx
    \end{align*}
    This gives
    \begin{align*}
        \pi r^2 |a| \leq (|a|+r)\frac{\epsilon}{2} + I^{1/2} (\epsilon/2)^{1/2}
    \end{align*}
    If $\epsilon \geq \pi r^2$, then clearly 
    \begin{align*}
        \epsilon \geq \pi r^2 \geq \delta^2/(4\pi r^2)
    \end{align*}
    Otherwise, we can move the $|a|\epsilon/2$ term to the right-hand side and get
    \begin{align*}
        \frac{\pi r^2}{2}|a| \leq r\epsilon/2 + I^{1/2} (\epsilon/2)^{1/2} \\
        \implies |a| \leq C(I,r) \epsilon^{1/2}
    \end{align*}
    This then gives
    \begin{align*}
        \pi r^2 - \epsilon/2 &=\int_{B(a,r)} \omega(x)dx = \int_{B(a,r) \cap B(0,r)} \omega(x)dx + \int_{B(a,r)\setminus B(0,r)} \omega(x)dx \\
        &\leq \pi r^2 -\delta/2 + |B(a,r)\setminus B(0,r)| \\
        &\leq\pi r^2 -\delta/2 + 2r|a| \\
        &\leq \pi r^2 - \delta/2 +C(I,r) \epsilon^{1/2} \\ \\
        \implies \epsilon &\geq C(I,r)\delta^2
    \end{align*}
    To see that the exponent is sharp, let $\delta << 1$. For the sake of clarity, we will only consider $r=1$. If we let $\omega_0 = \1_{B(\delta e_1,1)}$, then
    \begin{align*}
        ||\omega_0-\1_{B(0,1)}||_{L^1} \sim \delta
    \end{align*}
    To achieve
    \begin{align*}
        \int x\omega = 0,
    \end{align*}
    we can remove a mass of $\delta^2$ from $\omega_0$ and shift it in direction $-e_1$ by a distance on the order of $\delta^{-1}$. Let $\omega$ be the characteristic function of this new set. Then
    \begin{align*}
        \int |x|^2 \omega(x) \lesssim 1,
    \end{align*}
    and by carefully choosing constants in this process, we can ensure that $\omega$ satisfies the conditions of \cref{prop:ibound} with
    \begin{align*}
        \inf_{a\in\mathbb{R}^2}||\omega-\1_{B(a,1)}||_{L^1} \lesssim \delta^2.
    \end{align*}
\end{proof}

\printbibliography

\end{document}